\documentclass[english,review]{elsarticle}
\usepackage[T1]{fontenc}
\usepackage[latin9]{inputenc}
\synctex=-1
\usepackage{float}
\usepackage{mathtools}
\usepackage{amsmath}
\usepackage{amsthm}
\usepackage{amssymb}
\usepackage{graphicx}

\makeatletter

\floatstyle{ruled}
\newfloat{algorithm}{tbp}{loa}
\providecommand{\algorithmname}{Algorithm}
\floatname{algorithm}{\protect\algorithmname}

\theoremstyle{plain}
\newtheorem{thm}{\protect\theoremname}
\theoremstyle{definition}
\newtheorem{defn}[thm]{\protect\definitionname}
\theoremstyle{plain}
\newtheorem{lem}[thm]{\protect\lemmaname}
\theoremstyle{plain}
\newtheorem{prop}[thm]{\protect\propositionname}
\ifx\proof\undefined
\newenvironment{proof}[1][\protect\proofname]{\par
	\normalfont\topsep6\p@\@plus6\p@\relax
	\trivlist
	\itemindent\parindent
	\item[\hskip\labelsep\scshape #1]\ignorespaces
}{%
	\endtrivlist\@endpefalse
}
\providecommand{\proofname}{Proof}
\fi
\theoremstyle{remark}
\newtheorem{rem}[thm]{\protect\remarkname}

\usepackage{algorithm}
\usepackage{algpseudocode}
\usepackage{lineno}

\@ifundefined{showcaptionsetup}{}{%
 \PassOptionsToPackage{caption=false}{subfig}}
\usepackage{subfig}
\makeatother

\usepackage{babel}
\providecommand{\definitionname}{Definition}
\providecommand{\lemmaname}{Lemma}
\providecommand{\propositionname}{Proposition}
\providecommand{\remarkname}{Remark}
\providecommand{\theoremname}{Theorem}

\begin{document}

\begin{frontmatter}{}

\title{Robust Discontinuity Indicators for High-Order \\
Reconstruction of Piecewise Smooth Functions}

\author[sbu]{Yipeng Li}

\author[sbu]{Qiao Chen}

\author[sbu]{Xiangmin Jiao\corref{cor1}}

\ead{xiangmin.jiao@stonybrook.edu}

\cortext[cor1]{Corresponding author}

\address[sbu]{Dept. of Applied Math. \& Stat. and Institute for Advanced Computational
Science, Stony Brook University, Stony Brook, NY 11794, USA.}
\begin{abstract}
In many applications, piecewise continuous functions are commonly
interpolated over meshes. However, accurate high-order manipulations
of such functions can be challenging due to potential spurious oscillations
known as the Gibbs phenomena. To address this challenge, we propose
a novel approach, \emph{Robust Discontinuity Indicators} (\emph{RDI}),
which can efficiently and reliably detect both $C^{0}$ and $C^{1}$
discontinuities for node-based and cell-averaged values. We present
a detailed analysis focusing on its derivation and the dual-thresholding
strategy. A key advantage of RDI is its ability to handle potential
inaccuracies associated with detecting discontinuities on non-uniform
meshes, thanks to its innovative discontinuity indicators. We also
extend the applicability of RDI to handle general surfaces with boundaries,
features, and ridge points, thereby enhancing its versatility and
usefulness in various scenarios. To demonstrate the robustness of
RDI, we conduct a series of experiments on non-uniform meshes and
general surfaces, and compare its performance with some alternative
methods. By addressing the challenges posed by the Gibbs phenomena
and providing reliable detection of discontinuities, RDI opens up
possibilities for improved approximation and analysis of piecewise
continuous functions, such as in data remap.
\end{abstract}
\begin{keyword}
piecewise continuous functions\sep interpolation\sep approximation\sep
discontinuity detection \sep high-order reconstruction \sep data
remapping

\MSC[2010] 65D05\sep 65D15\sep 65D99
\end{keyword}

\end{frontmatter}{}


\section{Introduction}

The robust and efficient detection of discontinuities in piecewise
continuous functions plays a crucial role in a wide range of computational
applications, such as computer-aided design (CAD), computer graphics,
and computational fluid dynamics (CFD) \citep{hesthaven2008,botsch2010,leveque2002}.
If these discontinuities are not accurately identified and resolved,
they can lead to non-physical oscillations, a phenomenon known as
the \emph{Gibbs phenomenon} \citep{gottlieb1997}.

Several high-order numerical methods, such as the Discontinuous Galerkin
(DG) methods \citep{hesthaven2008} and Weighted Essentially Non-Oscillatory
(WENO) schemes \citep{liu1994weighted}, have been developed to tackle
these challenges. These methods excel at handling sharp transitions
and maintaining high-order accuracy when solving hyperbolic partial
differential equations (PDEs). However, they often encounter difficulties
in robustly detecting discontinuities.

In this paper, we present a novel method named \emph{Robust Discontinuity
Indicators} (\emph{RDI}), designed specifically to overcome the limitations
of existing methods by providing robust and efficient detection of
discontinuities in function values or derivatives, commonly referred
to as $C^{0}$ and $C^{1}$ discontinuities, respectively. RDI is
applicable for both node-based and cell-averaged values. We developed
RDI by extending the discontinuity indicators in our previous work
\citep{li2020wls}, where the analysis was more heuristic and high-level.
We present a more rigorous analysis, including the derivation of the
thresholds, and enhance the robustness of the indicators for detecting
discontinuities on non-uniform meshes.

RDI is particularly notable for its ability to support general surfaces
with complex features, such as boundaries, sharp ridges, and corners,
which are common in practical applications \citep{botsch2010}. This
versatility of RDI opens up new possibilities for its application
in computational physics and beyond. By accurately detecting and characterizing
discontinuities, RDI offers improved approximation and analysis of
piecewise continuous functions, enabling more reliable simulations,
data remapping, and other computational tasks. The development of
RDI fills a critical gap in the field, providing a powerful tool for
handling discontinuities and mitigating the adverse effects of the
Gibbs phenomena.

The remainder of the paper is organized as follows: Section~\ref{sec:Background-and-Preliminaries}
provides a brief review of weighted least squares approximations and
the Gibbs phenomena. Section~\ref{sec:Element-Based-Indicators}
analyzes overshoots and undershoots near discontinuities, leading
to the derivation of element-based indicators. Section~\ref{sec:Node-based-Markers}
introduces node-based markers using a dual-thresholding strategy and
discusses their applicability to non-uniform meshes and surfaces with
boundaries and sharp features. Section~\ref{sec:Numerical-Results}
presents several numerical experiments to demonstrate the robustness
of RDI and compares it with alternative approaches. Finally, Section~\ref{sec:Conclusions}
concludes the paper with a summary of the findings and possible future
research directions.

\section{\label{sec:Background-and-Preliminaries}Background and Preliminaries}

Weighted least square (WLS) is a powerful method used in data fitting,
remapping, reconstruction, etc. However, degree-$p$ WLS with $p\geq2$,
can introduce oscillations near discontinuities \citep{li2020wls}.
Interestingly, these oscillations can also be useful in detecting
discontinuities. In this section, we provide an overview of WLS and
discuss the phenomenon of oscillations, commonly referred to as the
Gibbs phenomenon.

\subsection{Related work\label{subsec:Related-work}}

Discontinuities often lead to undesired overshoots or undershoots
in numerical methods. Specifically, $C^{0}$ discontinuities, also
known as edges \citep{tadmor2007filters}, jump discontinuities \citep{cates2007detecting,hewitt1979gibbs}
or faults \citep{bozzini2013detection,de2002vertical}, tend to cause
$O(1)$ oscillations that persist with mesh refinement \citep{li2020wls}.
This phenomenon is referred to the Gibbs or Gibbs-Wilbraham phenomenon
in the context of the Fourier transform \citep{hewitt1979gibbs}.
While $C^{0}$ discontinuities have been extensively studied, less
attention has been given to $C^{1}$ discontinuities, which are discontinuities
in the gradients of functions. Nonetheless, $C^{1}$ discontinuities
are still crucial in various applications. For instance, they often
represent junctions or intersections of components, such as features
or ridge points. The $C^{1}$ discontinuities may cause mild oscillations
in numerical methods \citep{moura2022gradient}.

Detecting these discontinuities is essential for the effective resolution
of the Gibbs phenomenon in numerical methods. It also has significant
implications for various other applications, such as mesh reconstruction
\citep{li2019compact} and image processing \citep{shrivakshan2012comparison}.
Several methods have been developed for edge detection, such as the
minmod edge detection (MED) \citep{archibald2005polynomial,saxena2009high}
based on the polynomial annihilation technique \citep{archibald2008determining},
zero-crossing in the second derivatives \citep{marr1980theory,marr1976early,marr1979bandpass}
and the singularity detection and processing with wavelets (SDPW)
\citep{mallat1992singularity} based on wavelet transform. Among these
methods, MED focuses on a 2D unstructured mesh rather than pure image
processing. It approximates the jump function to detect the $C^{0}$
discontinuities and utilizes MED on an approximation of the gradient
to detect the $C^{1}$ discontinuities. However, the approximation
of gradient near the $C^{1}$ discontinuities may diffuse and result
in an oscillated smooth function, leading to some false negatives
in the detection. In Section~\ref{subsec:Comparison-with-minmod},
we will compare our method with MED.

\subsection{\label{subsec:Weighted-least-squares-approxima}Weighted-least-squares
approximations}

In numerical analysis, \textit{approximation} refers to using a simple
function to simulate a more complex function or a function without
an analytic formula. In practice, these functions might only have
values at a discrete set of points, which necessitates the use of
\emph{interpolation} to obtain an approximation from these sample
points. Degree-$p$ weighted least square is a powerful method that
employs polynomials of degree up to $p$ to approximate a function
sampled at discrete points. Taking degree-$2$ WLS as an example,
we can expand the $n$-dimensional Taylor series of a function $f$
at a point $\boldsymbol{x}=\boldsymbol{x}_{0}+\boldsymbol{h}$ using
the first three terms,
\begin{equation}
f(\boldsymbol{x}_{0}+\boldsymbol{h})=f(\boldsymbol{x}_{0})+\boldsymbol{h}^{T}\boldsymbol{\nabla}f(\boldsymbol{x}_{0})+\frac{1}{2}\boldsymbol{h}^{T}\boldsymbol{H}f(\boldsymbol{x}_{0})\boldsymbol{h}+\mathcal{O}(\Vert\boldsymbol{h}\Vert^{3}),\label{eq:taylor-series}
\end{equation}
where $\boldsymbol{\nabla}$ and $\boldsymbol{H}$ denote the gradient
and Hessian operators, respectively. 

By considering the approximation $f_{Q}(\boldsymbol{x})=f(\boldsymbol{x}_{0})+\boldsymbol{h}^{T}\tilde{\boldsymbol{g}}+\frac{1}{2}\boldsymbol{h}^{T}\tilde{\boldsymbol{H}}\boldsymbol{h}$,
where $\tilde{\boldsymbol{g}}$ and $\tilde{\boldsymbol{H}}$ denote
the approximations to $\boldsymbol{\nabla}f(\boldsymbol{x}_{0})$
and $\boldsymbol{H}f(\boldsymbol{x}_{0})$, respectively, we can express
each component of $\tilde{\boldsymbol{g}}$ and $\tilde{\boldsymbol{H}}$
as a linear combination of $f_{i}=f(\boldsymbol{x}_{i})$, i.e., 
\[
\tilde{g}_{j}=\sum_{k}c_{ik}f_{k}\qquad\text{and}\qquad\tilde{h}_{ij}=\sum_{k}d_{ij,k}f_{k}.
\]
Here, $c_{ik}$ and $d_{ij,k}$ are coefficients that depend on the
weights used in the WLS approximation. These coefficients have magnitudes
of $\Theta(1/h)$ and $\Theta(1/h^{2})$, respectively, and are independent
of the function $f$. When $f$ is at least $C^{2}$ continuous, the
quadratic WLS approximation $f_{Q}(\boldsymbol{x})$ is at least second-order
accurate. To determine the coefficients, we construct a linear system
\[
\boldsymbol{A}\boldsymbol{c}=\boldsymbol{f},
\]
where $\boldsymbol{A}$ is the generalized Vandermonde matrix and
$\boldsymbol{c}$ contains the coefficients to be determined. By including
an adequate number of sampling points in the stencil, we ensure the
well conditioning of the generalized Vandermonde matrix after scaling.
We assign weights to each equation to emphasize the importance of
the corresponding sample point while minimizing the least square of
the residual $\Vert\boldsymbol{W}(\boldsymbol{A}\boldsymbol{c}-\boldsymbol{f})\Vert_{2}$.
Different weight choices, such as distance-inverse weights \citep{JZ08CCF}
or radial basis function (RBF) weights \citep{buhmann2003radial}
based on distances, have been used in WLS, each suited for different
scenarios. Notably, the WLS-WENO weights \citep{hu1999weighted,shi2002technique,xu2011point,zhang2009third,liuzhang2013robust}
are based on function values and aim to resolve discontinuities.
\begin{defn}
A degree-$p$ WLS is \emph{stable} if the scaled Vandermonde matrix
has a bounded condition number independently of $h$.
\end{defn}
Here, $h$ represents the mesh size. Stability is critical in ensuring
the accuracy of the approximation under mesh refinement. Note that
interpolation is a special case of WLS where the Vandermonde matrix
is square. 
\begin{lem}
\label{lem:wls_accuracy} Given a $C^{p}$ function $f$, of which
the $(p+1)$st derivative is bounded, a stable degree-$p$ WLS $f_{p}$
is $(p+1)$st order accurate, i.e., 
\[
\left\vert f_{p}-f\right\vert =\mathcal{O}(h^{p+1}).
\]
\end{lem}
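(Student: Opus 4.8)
The plan is to reduce the estimate to two ingredients: the polynomial-reproduction property of least-squares fitting and a stability-driven bound on how much the fit can amplify perturbations of the data. Fix the base point $\boldsymbol{x}_{0}$ and write, by Taylor's theorem, $f=T_{p}+R$ on the stencil, where $T_{p}$ is the degree-$p$ Taylor polynomial of $f$ about $\boldsymbol{x}_{0}$. Since the $(p+1)$st derivative of $f$ is bounded and every stencil node satisfies $\Vert\boldsymbol{x}_{i}-\boldsymbol{x}_{0}\Vert=\mathcal{O}(h)$, we have $\vert R(\boldsymbol{x}_{i})\vert=\mathcal{O}(h^{p+1})$, and likewise $\vert R(\boldsymbol{x})\vert=\mathcal{O}(h^{p+1})$ at any evaluation point $\boldsymbol{x}$ with $\Vert\boldsymbol{x}-\boldsymbol{x}_{0}\Vert=\mathcal{O}(h)$. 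Because the map from data to fitted value is linear, $f_{p}=(T_{p})_{p}+R_{p}$.

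The first step is to observe $(T_{p})_{p}=T_{p}$: the columns of the generalized Vandermonde matrix $\boldsymbol{A}$ span every polynomial of degree at most $p$ on the stencil, so the data vector sampled from $T_{p}$ lies exactly in the column space of $\boldsymbol{A}$, the weighted least-squares residual vanishes, and the reconstruction returns $T_{p}$ itself. Consequently
\[
f_{p}(\boldsymbol{x})-f(\boldsymbol{x})=R_{p}(\boldsymbol{x})-R(\boldsymbol{x}),
\]
and it remains only to bound $\vert R_{p}(\boldsymbol{x})\vert$ by $\mathcal{O}(h^{p+1})$.

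For that I would pass to the scaled system used in the definition of stability. Rescaling the columns of $\boldsymbol{A}$ by the appropriate powers of $h$ is equivalent to re-expressing the fitted polynomial in the rescaled monomials $\bigl((\boldsymbol{x}-\boldsymbol{x}_{0})/h\bigr)^{\alpha}$; by hypothesis the resulting scaled Vandermonde matrix $\hat{\boldsymbol{A}}$ has condition number bounded independently of $h$, so the weighted pseudoinverse $(\boldsymbol{W}\hat{\boldsymbol{A}})^{+}\boldsymbol{W}$ has operator norm $\mathcal{O}(1)$. Evaluating the fit at $\boldsymbol{x}$ is an inner product of the resulting coefficient vector with the vector of rescaled monomials at $\boldsymbol{x}$, whose entries are $\mathcal{O}(1)$ because $\Vert\boldsymbol{x}-\boldsymbol{x}_{0}\Vert=\mathcal{O}(h)$. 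Chaining these bounds gives a Lebesgue-type inequality $\vert R_{p}(\boldsymbol{x})\vert\leq C\max_{i}\vert R(\boldsymbol{x}_{i})\vert$ with $C$ independent of $h$, and combining with the Taylor remainder estimate yields $\vert f_{p}(\boldsymbol{x})-f(\boldsymbol{x})\vert\leq\vert R_{p}(\boldsymbol{x})\vert+\vert R(\boldsymbol{x})\vert=\mathcal{O}(h^{p+1})$.

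The main obstacle is the bookkeeping in this last step: one must check that the diagonal column rescaling by powers of $h$ is exactly cancelled by passing to the rescaled monomial basis, so that no hidden $h$-dependence re-enters through the evaluation vector, and that \textbf{stability} as defined---a bounded condition number of the \emph{scaled} Vandermonde matrix---genuinely forces the \emph{weighted least-squares solution operator}, not merely the matrix, to have $h$-uniform norm. With that uniform bound in hand, the remainder of the argument is a direct substitution of the $\mathcal{O}(h^{p+1})$ Taylor estimate, and the case of interpolation follows since a square Vandermonde matrix is covered by the same reasoning.
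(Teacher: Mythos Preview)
The paper does not actually prove this lemma; it writes ``We omit the proof since it is similar to the arguments in \citep{JZ08CCF}.'' Your argument---Taylor decomposition $f=T_{p}+R$, polynomial reproduction $(T_{p})_{p}=T_{p}$, and a Lebesgue-type bound $\vert R_{p}(\boldsymbol{x})\vert\leq C\max_{i}\vert R(\boldsymbol{x}_{i})\vert$ obtained from the bounded condition number of the scaled Vandermonde---is exactly the standard route that reference takes, and it is correct. Your flagged caveat is the only real subtlety: the paper's definition of \emph{stability} constrains only the scaled matrix $\hat{\boldsymbol{A}}$, not the diagonal weighting $\boldsymbol{W}$, so to pass from a bounded $\kappa(\hat{\boldsymbol{A}})$ to an $h$-uniform bound on $(\boldsymbol{W}\hat{\boldsymbol{A}})^{+}\boldsymbol{W}$ one tacitly needs the weights to be bounded above and below away from zero independently of $h$. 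The distance-based and Buhmann weights used elsewhere in the paper satisfy this, so the argument goes through as you wrote it.
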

We omit the proof since it is similar to the arguments in \citep{JZ08CCF}.
Lemma~\ref{lem:wls_accuracy} will be used later in Sections~\ref{sec:Element-Based-Indicators}
and \ref{sec:Node-based-Markers} to derive our indicators in RDI.

\subsection{Weighted averaging of local fittings}

In this subsection, we review the concept of \emph{Weighted Averaging
of Local Fittings} (\emph{WALF}) \citep{Jiao2011RHO}, a technique
for reconstructing functions on discrete surfaces $\Omega\subset\mathbb{R}^{3}$.
Given a point $\boldsymbol{p}$ located within an element with vertices
$\{\boldsymbol{x}_{i}\}_{i=1}^{m}$, let $\boldsymbol{\xi}$ denote
the natural coordinates of $\boldsymbol{p}$ such that $\boldsymbol{p}=\sum_{i=1}^{m}\xi_{i}\boldsymbol{x}_{i}$.
For each vertices $\boldsymbol{x}_{i}$, we obtain an approximation
$g_{i}(\boldsymbol{p})$ by employing WLS fitting on the projected
function $f_{i}(\boldsymbol{u})\coloneqq f(\boldsymbol{x})$ defined
on $\Omega$, where $\boldsymbol{u}=\phi_{i}(\boldsymbol{x})\in\mathbb{R}^{2}$
represents the projection of $\boldsymbol{x}$ onto the normal plane
at $\boldsymbol{x}_{i}$, with $\boldsymbol{x}_{i}$ serving as the
origin of the local coordinate system. The WALF reconstruction for
$\boldsymbol{p}$ is then given by

\begin{equation}
g_{\textrm{WALF}}(\boldsymbol{p})=\sum_{i=1}^{m}\xi_{i}g_{i}(\boldsymbol{p}).\label{eq:WALF}
\end{equation}
For further details on WALF, we refer readers to \citep{Jiao2011RHO}.

To construct the WLS approximation $g_{i}$ for each sampled point
$\boldsymbol{x}_{i}$, we use the Buhmann weights in \citep{li2020wls,buhmann2001new}.
These weights are solely determined by the distance and are independent
of the function values. By precomputing $g_{i}$ using this approach,
we can reuse $g_{i}$ for all the points in the proximity of $\boldsymbol{x}_{i}$.
Compared to pure WLS and the \emph{Continuous Moving Frames} (CMF)
method \citep{Jiao2011RHO,RayWanJia12}, where a separate WLS approximation
is constructed for each target point, WALF offers computational advantages
when the number of sampled points is smaller than the number of target
points. However, it is important to note that WALF is more susceptible
to oscillations near discontinuities than CMF, as demonstrated in
\citep{Jiao2011RHO}. This oscillatory property of WALF turns out
to be useful in detecting discontinuities, as we will discuss in Section~\ref{subsec:Precomputing-operator-for}.

\section{\label{sec:Element-Based-Indicators}Element-Based Indicators}

Our technique shares similarities with that of Li et al. \citep{li2020wls}.
However, the discussions in \citep{li2020wls} were relatively high
level. We present a more rigorous analysis, including the derivation
of the thresholds.

\subsection{Asymptotic bounds in smooth regions}

Consider a function $f$ defined over a surface mesh $\Omega$ embedded
in $\mathbb{R}^{3}$. To derive discontinuity indicators, we start
with establishing asymptotic bounds that quantify the difference between
stable quadratic polynomial fitting and stable linear interpolation.
These bounds provide insights into the accuracy of our approach and
its ability to capture smooth variations in the underlying function.
\begin{prop}
\label{prop:smooth-error}In $C^{2}$ regions of a piecewise smooth
function over a surface mesh $\Omega$, the difference between stable
quadratic WLS and stable linear interpolation is $\mathcal{O}(h^{2})$.
\end{prop}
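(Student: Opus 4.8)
The plan is to bound the quadratic WLS fit and the linear interpolant separately by comparison with the exact function $f$ on a $C^2$ region, and then combine the two estimates by the triangle inequality. Since both constructions are built on stencils of local mesh size $h$, and $f$ is $C^2$ with bounded second derivatives there, each approximation agrees with $f$ to its respective order of accuracy.

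First I would invoke Lemma~\ref{lem:wls_accuracy}: the stable quadratic WLS fit $f_{Q}$ is a stable degree-$2$ WLS, so on a region where $f$ is $C^2$ (hence $p=2$ with bounded third derivative when $f$ is piecewise smooth, or more carefully $C^{1,1}$ suffices for an $\mathcal{O}(h^2)$ bound) we get $\lvert f_{Q}-f\rvert = \mathcal{O}(h^{2})$. Actually, to be careful with the hypothesis as stated ($C^2$ only), I would note that a stable quadratic WLS reproduces quadratic polynomials exactly, so a Taylor expansion of $f$ about the fitting center, together with the $\Theta(1/h^{k})$ bounds on the stencil coefficients stated in Section~\ref{subsec:Weighted-least-squares-approxima}, gives $\lvert f_{Q}-f\rvert = \mathcal{O}(h^{2})$ from the second-order Taylor remainder. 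For the linear interpolant $f_{L}$ on an element, the standard interpolation error estimate on a shape-regular (non-degenerate) element of size $h$ gives $\lvert f_{L}-f\rvert \le C h^{2}\lVert D^{2}f\rVert_{\infty}$, i.e.\ $\mathcal{O}(h^{2})$ as well; here "stable linear interpolation" supplies exactly the non-degeneracy (bounded condition number of the scaled Vandermonde matrix) needed for the constant $C$ to be uniform.

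Then the result follows immediately: on the overlap of the two stencils (in particular at points of the element in question),
\[
\lvert f_{Q} - f_{L}\rvert \;\le\; \lvert f_{Q} - f\rvert + \lvert f - f_{L}\rvert \;=\; \mathcal{O}(h^{2}) + \mathcal{O}(h^{2}) \;=\; \mathcal{O}(h^{2}).
\]
One subtlety to address is the surface setting: the WLS fits $g_i$ in WALF are built on the projected functions $f_i(\boldsymbol{u}) = f(\boldsymbol{x})$ in local tangent-plane coordinates, so I would remark that the projection $\phi_i$ is a local diffeomorphism with bounded distortion on a smooth piece of $\Omega$, so the planar estimates transfer to $\Omega$ with the constants absorbed into the $\mathcal{O}$. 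The main obstacle, and the only place requiring genuine care, is matching the smoothness hypothesis: getting $\mathcal{O}(h^{2})$ for the quadratic fit from only $C^2$ data (rather than $C^3$) requires using polynomial reproduction plus the coefficient-magnitude bounds rather than a black-box application of Lemma~\ref{lem:wls_accuracy} at $p=2$; alternatively one reads "$C^2$ region of a piecewise smooth function" as locally as smooth as needed, in which case Lemma~\ref{lem:wls_accuracy} applies directly and the proof is just the triangle inequality above.
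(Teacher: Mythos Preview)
Your proposal is correct and follows essentially the same route as the paper: bound $\lvert f_L - f\rvert$ and $\lvert f_Q - f\rvert$ separately via Lemma~\ref{lem:wls_accuracy} and combine by the triangle inequality. The paper simply asserts $\lvert f_Q - f\rvert = \mathcal{O}(h^3)$ under the $C^2$ hypothesis (implicitly reading ``piecewise smooth'' as giving a bounded third derivative) and does not spell out the surface-projection remark, so your version is in fact a bit more careful on both points, but the argument is the same.
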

\begin{proof}
Without loss of generality, let us first assume $\Omega$ is composed
of triangles. Let $f_{L}(\boldsymbol{x}):\Omega\rightarrow\mathbb{R}$
and $f_{Q}(\boldsymbol{x}):\Omega\rightarrow\mathbb{R}$ denote the
linear interpolation and quadratic WLS of a $C^{2}$ function $f$,
respectively. 
\begin{equation}
\left|f_{L}-f_{Q}\right|=\left|(f_{L}-f)-(f_{Q}-f)\right|\leq\left|f_{L}-f\right|+\left|f_{Q}-f\right|,\label{eq:diff-linearinterp-quadratic-wls}
\end{equation}
where $\left|f_{L}-f\right|=\mathcal{O}(h^{2})$ and $\left|f_{Q}-f\right|=\mathcal{O}(h^{3})$
if $f$ is $C^{2}$, as per Lemma~\ref{lem:wls_accuracy}. Therefore,
$\left|f_{L}-f_{Q}\right|=\mathcal{O}(h^{2})$. If the mesh is composed
of quadrilaterals, we can replace linear interpolation with bilinear
interpolation, and the error bounds still hold.
\end{proof}
\begin{rem}
Even if we replace quadratic WLS with a different degree polynomial---be
it linear, cubic, or of a higher degree---Proposition~\ref{prop:smooth-error}
still holds true. This is because the error term in the proposition
is primarily dictated by the linear interpolation. However, we specifically
choose quadratic WLS for its balance of effectiveness and efficiency.
While linear WLS does not introduce oscillations, which are necessary
for identifying discontinuities, higher-degree WLS methods can increase
computational costs. As such, quadratic WLS presents the optimal degree
choice; it is the lowest-degree WLS method capable of introducing
oscillations, and yet it does so without significantly escalating
computational burden. This balance enables us to detect discontinuities
more efficiently.
\end{rem}

\subsection{Asymptotic bounds near discontinuities}

The analysis becomes more complex when a function has discontinuities.
The derivative of a $C^{0}$ function is a Heaviside function, and
the derivative of a Heaviside function at the discontinuity equates
to a Dirac delta function in a distribution sense. Although such a
generalized notion could be used within the context of Taylor series,
it is insufficient to employ the Taylor series alone to derive the
error bounds. Generalizing the analysis in Section~\ref{subsec:Weighted-least-squares-approxima},
we derive the error bounds as follows.
\begin{prop}
\label{prop:discon-error}In a neighborhood of $C^{0}$ and $C^{1}$
discontinuities of a piecewise-smooth function, the difference between
quadratic WLS and linear interpolation is $\mathcal{O}(1)$ and $\mathcal{O}(h)$,
respectively, where $h$ denotes an edge-length measure.
\end{prop}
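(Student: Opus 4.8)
The plan is to use that, at any target point $\boldsymbol{p}$ inside an element, both reconstructions are \emph{linear} functionals of the nodal data and reproduce constants, and to split $f$ near the discontinuity into a globally $C^{2}$ part plus a ``singular'' part that we handle directly rather than through a Taylor series. Write $L$ for linear interpolation on the element (bilinear on quadrilaterals) and $Q$ for the quadratic WLS reconstruction (the same conclusions hold for WALF, which is a convex combination of such fits). As recalled in Section~\ref{subsec:Weighted-least-squares-approxima}, $Q(f)(\boldsymbol{p})=\sum_{k}c_{k}(\boldsymbol{p})f_{k}$ with coefficients independent of $f$, $\sum_{k}c_{k}=1$, and, by stability (cf. Lemma~\ref{lem:wls_accuracy}), $\sum_{k}|c_{k}(\boldsymbol{p})|\le\Lambda$ with $\Lambda$ independent of $h$; similarly $L(f)(\boldsymbol{p})=\sum_{i}\phi_{i}(\boldsymbol{p})f_{i}$ with $\phi_{i}\ge0$ and $\sum_{i}\phi_{i}=1$. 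As in the proof of Proposition~\ref{prop:smooth-error}, it suffices to work in the local projected $2$-D chart used for the fit, since the surface curvature and chart distortion perturb every quantity below only at order $\mathcal{O}(h^{2})$.

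For the $C^{0}$ case the upper bound is almost immediate: a piecewise-$C^{2}$ function is bounded on the stencil, $\Vert f\Vert_{L^{\infty}}=\mathcal{O}(1)$, so $|f_{Q}(\boldsymbol{p})|\le\Lambda\Vert f\Vert_{L^{\infty}}=\mathcal{O}(1)$ and $|f_{L}(\boldsymbol{p})|\le\Vert f\Vert_{L^{\infty}}=\mathcal{O}(1)$, hence $|f_{Q}-f_{L}|=\mathcal{O}(1)$. To expose the mechanism and show the order is sharp, decompose $f=f_{0}+\rho\,\chi$ near the discontinuity curve $\Gamma$, where $f_{0}$ is a $C^{2}$ extension of $f$ from one side, $\chi$ is the indicator of the other side, and $\rho$ is $C^{2}$ with $\rho|_{\Gamma}$ equal to the jump $J=\mathcal{O}(1)$. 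By linearity, $f_{Q}-f_{L}=(Q-L)(f_{0})+(Q-L)(\rho\chi)$: the first term is $\mathcal{O}(h^{2})$ by Proposition~\ref{prop:smooth-error}, while the nodal values of $\rho\chi$ lie in $\{0\}\cup\{J+\mathcal{O}(h)\}$, so pushing them through the $\Theta(1/h)$ and $\Theta(1/h^{2})$ gradient/Hessian coefficients of the quadratic fit (the generalization of Section~\ref{subsec:Weighted-least-squares-approxima} alluded to above) produces a contribution of order $J=\Theta(1)$ --- precisely the Gibbs overshoot that the convex combination $L$ cannot generate.

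For the $C^{1}$ case, $f$ is continuous and piecewise $C^{2}$ with matching traces on $\Gamma$, hence Lipschitz near $\Gamma$ with an $\mathcal{O}(1)$ constant $L_{f}$. Then $|f_{Q}(\boldsymbol{p})-f(\boldsymbol{p})|=\bigl|\sum_{k}c_{k}(\boldsymbol{p})\,(f_{k}-f(\boldsymbol{p}))\bigr|\le\Lambda L_{f}\max_{k}\Vert\boldsymbol{x}_{k}-\boldsymbol{p}\Vert=\mathcal{O}(h)$, and likewise $|f_{L}(\boldsymbol{p})-f(\boldsymbol{p})|=\mathcal{O}(h)$, so $|f_{Q}-f_{L}|=\mathcal{O}(h)$. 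The refined picture comes again from $f=f_{0}+g\,\chi$, but now $g$ is $C^{2}$ and \emph{vanishes} on $\Gamma$, so $g=\mathcal{O}(\mathrm{dist}(\cdot,\Gamma))$, its nodal values are $\mathcal{O}(h)$, and feeding this ``ramp'' data through the quadratic fit gives a contribution of order $h$ times the normal-derivative jump, i.e. $\Theta(h)$, which is exactly the part of $f_{Q}$ that overshoots the piecewise-linear $f_{L}$.

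The routine parts are the two linearity/consistency estimates. The delicate points I would treat with care are: (i) that for all sufficiently small $h$ the curve $\Gamma$ meets the stencil ``cleanly'', so that the one-sided $C^{2}$ extensions and the indicator $\chi$ are well defined and the stencil nodes split into a $+$ side and a $-$ side --- this is where piecewise smoothness and a bound on the curvature of $\Gamma$ enter, and also where extra attention is needed on non-uniform meshes and near mesh boundaries, features, and ridge points; and (ii) upgrading the estimates from upper bounds to \emph{sharp} orders, i.e. showing the quadratic-fit coefficients do not conspire to cancel the jump/ramp contribution. I expect (ii) to be the main obstacle, since it requires quantitative information about the particular WLS operator (its weights and stencil), whereas the $\mathcal{O}(1)$ and $\mathcal{O}(h)$ upper bounds need only stability; yet it is (ii) that makes the separation from the $\mathcal{O}(h^{2})$ smooth bound of Proposition~\ref{prop:smooth-error} usable as a discontinuity indicator.
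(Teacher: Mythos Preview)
Your argument is correct, but the route differs from the paper's. For the upper bounds, the paper bounds $|f_{Q}(\boldsymbol{x})-f(\boldsymbol{x})|$ by first observing that $\max_{i,j}|f_{i}-f_{j}|=\mathcal{O}(1)$ (respectively $\mathcal{O}(h)$) near a $C^{0}$ (respectively $C^{1}$) discontinuity, from which the fitted gradient and Hessian satisfy $\tilde{\boldsymbol{g}}=\mathcal{O}(h^{-1})$, $\tilde{\boldsymbol{H}}=\mathcal{O}(h^{-2})$ (respectively $\mathcal{O}(1)$, $\mathcal{O}(h^{-1})$); plugging these into the quadratic form $\boldsymbol{h}^{T}\tilde{\boldsymbol{g}}+\tfrac{1}{2}\boldsymbol{h}^{T}\tilde{\boldsymbol{H}}\boldsymbol{h}$ gives the result. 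You instead use the Lebesgue-constant viewpoint: $Q$ and $L$ are linear functionals of the nodal data with $\sum_{k}c_{k}=1$ and $\sum_{k}|c_{k}|\le\Lambda$ uniformly in $h$, so boundedness of $f$ gives $\mathcal{O}(1)$ and Lipschitz continuity of $f$ gives $\mathcal{O}(h)$ directly. Your argument is shorter and isolates exactly the hypothesis needed (stability $\Rightarrow$ bounded $\Lambda$), whereas the paper's derivative-bound route makes more visible \emph{why} the quadratic fit overshoots, which motivates the indicator.

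Your decomposition $f=f_{0}+\rho\chi$ (and $f=f_{0}+g\chi$ with $g|_{\Gamma}=0$ in the $C^{1}$ case) is not used in the paper for this proposition; it is essentially the device the paper employs for the \emph{next} result (Proposition~\ref{prop:reach-discon-error}), where the sharp $\Theta(1)$ and $\Theta(h)$ orders are established. So your write-up effectively merges the two propositions, and your point~(ii) about non-cancellation of the jump/ramp contribution is precisely the content of that separate statement. If you intend to match the paper's structure, you can keep only the Lebesgue-constant bounds here and defer the decomposition and the sharpness discussion.
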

\begin{proof}
First, we consider $C^{0}$ discontinuities, i.e., discontinuities
in function values. If $\boldsymbol{x}_{0}$ is close to a $C^{0}$
discontinuity, we can find a point $\boldsymbol{x}_{*}$ on the discontinuity
such that $\left|f(\boldsymbol{x}_{0})-f(\boldsymbol{x}_{*})\right|=\mathcal{O}(h)$.
For any point $\boldsymbol{x}$ in the vicinity of $\boldsymbol{x}_{0}$,
depending on whether $\boldsymbol{x}_{*}$ is on the same side of
the discontinuity as $\boldsymbol{x}_{0}$, we have $\left|f(\boldsymbol{x})-f(\boldsymbol{x}_{*})\right|=\mathcal{O}(h)$
or $\mathcal{O}(1)$. Hence, 
\begin{align*}
\left|f_{Q}(\boldsymbol{x})-f(\boldsymbol{x})\right| & =\left|(f_{Q}(\boldsymbol{x})-f(\boldsymbol{x}_{0}))-(f(\boldsymbol{x})-f(\boldsymbol{x}_{*}))+(f(\boldsymbol{x}_{0})-f(\boldsymbol{x}_{*}))\right|\\
 & \leq\left|f_{Q}(\boldsymbol{x})-f(\boldsymbol{x}_{0})\right|+\left|f(\boldsymbol{x})-f(\boldsymbol{x}_{*})\right|+\left|f(\boldsymbol{x}_{0})-f(\boldsymbol{x}_{*})\right|.
\end{align*}
Owing to the $\mathcal{O}(1)$ jump, $\max_{i,j}\left\{ \vert f_{i}-f_{j}\vert\right\} =\mathcal{O}(1)$.
The gradient and Hessian of the WLS fitting are bounded by $\tilde{\boldsymbol{g}}=\mathcal{O}(h^{-1})$
and $\tilde{\boldsymbol{H}}=\mathcal{O}(h^{-2})$, respectively. Therefore,
\[
\left|f_{Q}(\boldsymbol{x})-f(\boldsymbol{x}_{0})\right|=\left|\boldsymbol{h}^{T}\tilde{\boldsymbol{g}}+\frac{1}{2}\boldsymbol{h}^{T}\tilde{\boldsymbol{H}}\boldsymbol{h}\right|=\mathcal{O}(1),
\]
and in turn
\[
\left|f_{Q}(\boldsymbol{x})-f(\boldsymbol{x})\right|\leq\left|f_{Q}(\boldsymbol{x})-f(\boldsymbol{x}_{0})\right|+\mathcal{O}(1)=\mathcal{O}(1).
\]

Second, if $\boldsymbol{x}_{0}$ is close to a $C^{1}$ discontinuity,
i.e., discontinuities in derivatives, for any point $\boldsymbol{x}$
in the vicinity of $\boldsymbol{x}_{0}$, we can find a point $\boldsymbol{x}_{*}$
on the discontinuity such that $\left|f(\boldsymbol{x}_{0})-f(\boldsymbol{x}_{*})\right|=\mathcal{O}(h)$
and $\left|f(\boldsymbol{x})-f(\boldsymbol{x}_{*})\right|=\mathcal{O}(h)$.
Due to the discontinuities, $\max_{i,j}\left\{ \vert f_{i}-f_{j}\vert\right\} =\mathcal{O}(h)$.
Hence, the gradient and Hessian of the WLS fitting are $\tilde{\boldsymbol{g}}=\mathcal{O}(1)$
and $\tilde{\boldsymbol{H}}=\mathcal{O}(h^{-1})$, respectively, and
\[
\left|f_{Q}(\boldsymbol{x})-f(\boldsymbol{x}_{0})\right|=\left|\boldsymbol{h}^{T}\tilde{\boldsymbol{g}}+\frac{1}{2}\boldsymbol{h}^{T}\tilde{\boldsymbol{H}}\boldsymbol{h}\right|=\mathcal{O}(h),
\]
Therefore, 
\[
\left|f_{Q}(\boldsymbol{x})-f(\boldsymbol{x})\right|\leq\left|f_{Q}(\boldsymbol{x})-f(\boldsymbol{x}_{0})\right|+\left|f(\boldsymbol{x})-f(\boldsymbol{x}_{*})\right|+\left|f(\boldsymbol{x}_{0})-f(\boldsymbol{x}_{*})\right|=\mathcal{O}(h).
\]
\end{proof}
\begin{prop}
\label{prop:reach-discon-error}The difference between quadratic polynomial
fitting and linear interpolation reaches $\varTheta(1)$ and $\varTheta(h)$
at some points near $C^{0}$ and $C^{1}$ discontinuities, respectively. 
\end{prop}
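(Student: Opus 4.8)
The plan is to supply the lower bounds matching the upper bounds of Proposition~\ref{prop:discon-error}; combined, they yield the claimed $\varTheta(1)$ and $\varTheta(h)$. The key tool is the scale-covariance of the two reconstruction operators together with the stability hypothesis: if the stencil around a point $\boldsymbol{x}_0$ near the discontinuity is rescaled by $h$ about $\boldsymbol{x}_0$, the stable linear interpolant $f_L$ and the stable quadratic WLS fit $f_Q$ transform predictably (the scaled Vandermonde matrix is independent of $h$, so the WLS coefficients on the rescaled stencil are bounded and depend only on the stencil shape, not on $h$). Evaluating $f_Q-f_L$ at a fixed natural coordinate inside an element therefore reduces, up to the amplitude of the data variation, to the same quantity on a fixed reference stencil carrying fixed reference data, and it suffices to show this reference quantity is nonzero for some evaluation point.

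For the $C^0$ case I would take as model the function that jumps by a fixed amount $J\neq 0$ across the discontinuity curve, with smooth pieces on either side. By Proposition~\ref{prop:smooth-error}, replacing the two smooth pieces by constants alters $f_Q-f_L$ only by $\mathcal{O}(h^2)$, so it is enough to analyze the piecewise-constant (Heaviside) model. On a fixed reference stencil straddling the jump, $f_L$ takes values in the closed interval spanned by the two constants, whereas a direct computation of $f_Q$ at, for instance, the centroid of the element adjacent to the jump on one side produces a value outside that interval by a fixed positive multiple of $J$ --- the Gibbs overshoot. Rescaling back, $\left|f_Q-f_L\right|$ at the corresponding mesh point is $\varTheta(J)=\varTheta(1)$.

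For the $C^1$ case I would use the ramp profile $\max(s,0)$ in the direction $s$ transversal to the discontinuity (more generally, a function whose gradient jumps across the curve). Near the discontinuity $\max_{i,j}\left|f_i-f_j\right|=\varTheta(h)$, and after factoring this $\varTheta(h)$ amplitude out the analysis again reduces to a fixed reference stencil with piecewise-linear reference data; since a single quadratic polynomial cannot reproduce the kink, $f_Q-f_L$ is a nonzero constant at a suitable reference point, and the smooth corrections are $\mathcal{O}(h^2)=o(h)$. Rescaling gives $\left|f_Q-f_L\right|=\varTheta(h)$ at the corresponding mesh point.

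The step I expect to be the main obstacle is \emph{non-degeneracy}: one must ensure the chosen reference stencil and evaluation point do not make $f_Q-f_L$ vanish accidentally, which can happen for overly symmetric stencils or for special evaluation points where the quadratic and linear reconstructions coincide. I would handle this by fixing one explicit, mildly generic configuration --- e.g., a uniform one-dimensional three- or five-point stencil in the cross-discontinuity direction, evaluated at the centroid of the element just beyond the jump --- carrying out the small least-squares computation there to exhibit a nonzero value, and then observing that this value depends continuously on the stencil geometry, so it remains nonzero for generic meshes. The surface case reduces to this one-dimensional picture by restricting attention to the direction transversal to the discontinuity and using the normal-plane projection underlying the WALF reconstruction.
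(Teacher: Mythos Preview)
Your proposal is sound and structurally parallel to the paper's argument: both decompose $f$ into a model discontinuity plus a smoother remainder, bound the remainder by Propositions~\ref{prop:smooth-error} and~\ref{prop:discon-error}, and then isolate the lower bound on the model piece. The tactical difference lies in how non-degeneracy is secured. You reduce via scale-covariance to a fixed reference stencil, propose an explicit computation there (Gibbs overshoot for the Heaviside; nonvanishing of $f_Q-f_L$ for the ramp), and invoke continuity in the stencil geometry. The paper instead works directly on the given mesh: writing $f_Q(\boldsymbol{x}_0)=\sum_i c_i f_i$ with coefficients depending only on geometry, it chooses $\boldsymbol{x}_0$ in an element not crossing the discontinuity but whose stencil does, and then slides $\boldsymbol{x}_0$ so that exactly one stencil point lies on the far side; the model contribution then collapses to a single term $\lambda c_j$ with $c_j\neq 0$, giving $\Theta(1)$ immediately. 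For the $C^1$ case the paper does not use your ramp model but again a step of height $\Theta(h)$ (obtained by subtracting the smooth extension of one branch), so both cases run through the same one-point argument. The paper's trick is shorter and sidesteps any explicit least-squares calculation or continuity-in-geometry argument; your route is more constructive and makes the connection to the classical Gibbs picture explicit, at the cost of having to actually carry out (or at least indicate) the reference computation you flag as the main obstacle.
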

\begin{proof}
The full proof follows a similar logic to Proposition~\ref{prop:discon-error}
but requires decomposing the function $f$ in terms of discontinuity
behavior and careful selection of suitable points $\boldsymbol{x}_{0}$
for the analysis.

Let us first consider a function $f$ with $C^{0}$ discontinuities.
For convenience, we can decompose it into two components: one with
and one without discontinuities, respectively. Specifically, we express
$f$ as $f=f^{0}+f^{1}$, where $f^{0}$ contains $C^{0}$ discontinuities
and $f^{1}$ is $C^{0}$ continuous. We represent the quadratic polynomial
fitting and linear interpolation of $f$ at a point $\boldsymbol{x}$
as $q(\boldsymbol{x}:f)$ and $l(\boldsymbol{x}:f)$, respectively.
Hence, we have
\[
q(\boldsymbol{x}:f)=q(\boldsymbol{x}:f^{0})+q(\boldsymbol{x}:f^{1})
\]
and
\[
l(\boldsymbol{x}:f)=l(\boldsymbol{x}:f^{0})+l(\boldsymbol{x}:f^{1}).
\]
Let us consider $\boldsymbol{x}_{0}\in e_{0}$, a point near $C^{0}$
discontinuities, such that the element $e_{0}$ does not cover any
$C^{0}$ discontinuities region, but the stencil of $\boldsymbol{x}_{0}$
does. The quadratic polynomial fitting at $\boldsymbol{x}_{0}$ can
be expressed as 
\[
q(\boldsymbol{x}_{0}:f)=\sum_{i=1}^{n}c_{i}f_{i},
\]
where $f_{i}=f(\boldsymbol{x}_{i})$ denotes the function value at
the stencil $\{\boldsymbol{x}_{i}\}_{i=1}^{n}$ of $\boldsymbol{x}_{0}$
and $c_{i}$ is related only to the position of $\boldsymbol{x}_{0}$
and its stencil $\{\boldsymbol{x}_{i}\}_{i=1}^{n}$. 

Assuming that the mesh is fine enough to contain only one $C^{0}$
discontinuity in the stencil, i.e., we can decompose $f$ in a small
neighborhood $\Lambda$ of $\boldsymbol{x}_{0}$ such that 
\[
f^{0}(\boldsymbol{x})=\begin{cases}
\lambda, & \textrm{if }\boldsymbol{x}\in\Gamma,\\
0, & \textrm{if }\boldsymbol{x}\in\Lambda\backslash\Gamma,
\end{cases}
\]
where $\lambda\neq0$ is a constant approximating the jump of the
$C^{0}$ discontinuity. Either $e_{0}\subset\Gamma$ or $e_{0}\subset\Lambda\backslash\Gamma$.
For the former case, we choose a new $\tilde{f}^{0}=\lambda-f^{0}$.
It is sufficient to set the non-zero region $\Gamma$ of $f^{0}$
to satisfy that $e_{0}\cap\Gamma=\emptyset$. Consequently, 
\[
l(\boldsymbol{x}_{0}:f^{0})=0.
\]
Because a $C^{0}$ discontinuity exists, $\{\boldsymbol{x}_{i}\}_{i=1}^{n}\cap\Gamma\neq\emptyset$.
Denoting $K=\{i|\boldsymbol{x}_{i}\in\Gamma,1\leq i\leq n\},$ we
get
\[
q(\boldsymbol{x}_{0}:f^{0})=\sum_{i\in K}c_{i}f_{i}^{0}=\lambda\sum_{i\in K}c_{i}.
\]
In a local neighborhood, $\Gamma$ is fixed. By moving $\boldsymbol{x}_{0}$
such that the cardinality of $K$ is one, $\#K=1$, we have $\sum_{i\in K}c_{i}\neq0$.
Consequently, 
\[
q(\boldsymbol{x}_{0}:f^{0})=\lambda\sum_{i\in K}c_{i}=\varTheta(1).
\]
According to Proposition~\ref{prop:discon-error}, 
\[
q(\boldsymbol{x}:f^{1})-l(\boldsymbol{x}:f^{1})=O(h),
\]
and therefore
\[
q(\boldsymbol{x}:f)-l(\boldsymbol{x}:f)=\varTheta(1)+O(h)=\varTheta(1).
\]

Secondly, if $\boldsymbol{x}$ is near a $C^{1}$ discontinuity, we
decompose $f$ in a local neighborhood of $\boldsymbol{x}$ as $f=f^{2}+f^{3}$,
where $f^{2}$ is $C^{0}$ continuous but contains $C^{1}$ discontinuities,
and $f^{3}$ is $C^{2}$ continuous. Assuming the mesh is fine enough
to have only one $C^{1}$ discontinuity in the stencil, we can find
a decomposition of $f$ so that
\[
f^{2}(x)=\begin{cases}
\Theta(h), & \boldsymbol{x}\in\Gamma\\
0, & \textrm{otherwise}
\end{cases},
\]
where $\Gamma$ is fixed in a local region and $e_{0}\cap\Gamma=\emptyset$.
Consequently, we have $l(\boldsymbol{x}_{0}:f^{2})=0$. In a similar
manner, by finding an $\boldsymbol{x}_{0}$ such that the cardinality
of $K=\{i|\boldsymbol{x}_{i}\in\Gamma,1\leq i\leq n\}$ is one, we
have 
\[
q(\boldsymbol{x}_{0}:f^{2})=\sum_{i\in K}c_{i}f_{i}^{2}=\Theta(h).
\]
According to Proposition~\ref{prop:smooth-error}, $q(\boldsymbol{x}:f^{3})-l(\boldsymbol{x}:f^{3})=O(h^{2})$,
and thus,
\[
q(\boldsymbol{x}:f)-l(\boldsymbol{x}:f)=\varTheta(h)+O(h^{2})=\varTheta(h).
\]
This completes the proof of the proposition.
\end{proof}
It is important to note that the bounds in Proposition~\ref{prop:discon-error}
represent upper limits. In practice, the differences may be significantly
smaller at certain points, so discontinuity indicators may yield false
negatives near such points, which we will address in Section~\ref{sec:Node-based-Markers}.

\subsection{\label{subsec:Precomputing-operator-for}Precomputing operator for
cell-based overshoot-undershoot indicators}

\subsubsection*{Overshoot-undershoot indicator}

We first build an operator to compute cell-based indicators for local
overshoots and undershoots, which are inspired by Proposition~\ref{prop:discon-error}.
For a cell $\sigma$, let $g_{\sigma,1}$ and $g_{\sigma,2}$ denote
the approximate values at the ``cell center'' of $\sigma$ from
linear interpolation and quadratic WALF, respectively. We compute
a value $\alpha_{\sigma}$ as the difference between $g_{\sigma,2}$
and $g_{\sigma,1}$, that is, 
\begin{equation}
\alpha_{\sigma}=g_{\sigma,2}-g_{\sigma,1}.\label{eq:cell-based-indicator}
\end{equation}
We refer to $\alpha_{\sigma}$ as the \emph{overshoot-undershoot}
(\emph{OSUS}) \emph{indicator} at $\sigma$, since its positive and
negative sign respectively indicates local overshoot and undershoot
of $g_{\sigma,2}$ at $\sigma$, and its magnitude indicates the level
of overshoot or undershoot. We compute the cell center by simply averaging
the nodes of $\sigma$. Note that this point is not necessarily the
centroid in general. We choose this point to compute $g_{\sigma,:}$
because $g_{\sigma,1}$ is essentially the average of corresponding
nodal values, and more importantly, $g_{\sigma,2}$ is prone to overshoot
and undershoot more significantly at this point than at points closer
to nodes.

Both $g_{\sigma,2}$ and $g_{\sigma,1}$ are weighted sums of nodal
values, and so is $\alpha_{\sigma}$. Consequently, the computation
of the $\alpha$ values for all the cells can be expressed as a sparse
matrix-vector multiplication. We refer to this sparse matrix as the
\emph{OSUS operator}. Each row of the operator corresponds to a cell
center, each column corresponds to a node in the mesh, and each nonzero
entry stores a nonzero weight. Specifically, we compute the quadratic
WALF using the 1.5-ring stencil at each node of the cell \citep{Jiao2011RHO},
and hence the nonzeros in each row for a cell $\sigma$ would correspond
to the union of the 1.5-rings of the nodes of $\sigma$. This RDI
operator only depends on the mesh and is independent of the function
values, so it can be pre-computed for a given mesh.

Its computation requires a data structure that supports computing
the $k$-ring neighborhood of nodes, and we implement it using an
array-based half-facet data structure \citep{DREJTAHF2014}. Occasionally,
for some poorly shaped meshes, the Vandermonde matrix from WALF may
be ill-conditioned. When this happens, we monitor the condition number
of the Vandermonde matrix and enlarge the stencils for ill-conditioned
systems.

\subsubsection*{Element-based threshold}

In accordance with Propositions~\ref{prop:smooth-error}, \ref{prop:discon-error},
and \ref{prop:reach-discon-error}, most of the smooth regions can
be filtered out by an element-wise threshold
\begin{equation}
\tau_{\sigma}=\max\left\{ \underbrace{C_{\ell}\delta f_{\ell}h_{\ell}^{0.5}}_{\tau_{\ell}},\underbrace{C_{g}\delta f_{g}h_{g}^{1.5}}_{\tau_{g}}\right\} ,\label{eq:threshold-element}
\end{equation}
where $\delta f_{\ell}$ represents the local range of $f$ within
the $k$-ring neighborhood for WALF reconstruction, $h_{\ell}$ is
the local average edge length in the local $uv$ coordinate system,
and $\delta f_{g}$ denotes the global range of function over the
mesh, and $h_{g}$ denotes a global measure of average edge length
in the $xyz$ coordinate system. $C_{\ell}$ and $C_{g}$ are two
parameters, which we determine empirically. All the elements with
$|\alpha_{\sigma}|>\tau_{\sigma}$ would be marked. However, some
elements at the peak might also be marked, leading to false positives.
We will further filter out these false positives using the node-based
indicator in Section~\ref{sec:Node-based-Markers}.

\section{Node-based Markers via Dual Thresholding\label{sec:Node-based-Markers}}

In Section~\ref{subsec:Precomputing-operator-for}, we defined an
element-based indicator to exclude most of the smooth region. However,
using solely the element-based indicator may lead to false positives,
particularly in the vicinity of local extremes on coarse meshes. To
mitigate this issue, we propose a \emph{dual-thresholding} approach
using node-based markers.

\subsection{Properties of $\alpha_{\sigma}$ near local extremes}

To derive more accurate indicators, we need a more precise estimation
of $\alpha_{\sigma}$. In the following, we derive the theory first
in $\mathbb{R}^{2}$ and then generalize it to surfaces.
\begin{prop}
\label{prop:alpha same sign plane}In the $C^{2}$ regions of a piecewise
smooth function, $\alpha_{\sigma}$ is negative near a local minimum
and positive near a local maximum on a sufficiently fine mesh of a
plane.
\end{prop}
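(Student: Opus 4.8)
The plan is to work in local coordinates centered at the cell $\sigma$ and exploit a Taylor expansion of $f$ together with the fact that, in a $C^2$ smooth region, both the linear interpolant $g_{\sigma,1}$ and the quadratic WALF reconstruction $g_{\sigma,2}$ approximate $f$ to known orders, so their difference $\alpha_\sigma$ is controlled by the quadratic (Hessian) term of $f$. Concretely, at a point $\boldsymbol{x}_0$ near a local minimum, $\boldsymbol{\nabla} f(\boldsymbol{x}_0) = \mathcal{O}(\|\boldsymbol{x}_0 - \boldsymbol{x}_{\min}\|)$, which on a sufficiently fine mesh is $\mathcal{O}(h)$, so the leading behavior of $f$ restricted to the stencil is governed by its Hessian $\boldsymbol{H} f$, which is symmetric positive definite near a strict local minimum (and negative definite near a local maximum). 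I would therefore first reduce, by the remark following Proposition~\ref{prop:smooth-error}, to the model situation where $f$ is exactly the quadratic $\frac12 \boldsymbol{h}^T \boldsymbol{H} \boldsymbol{h}$ plus lower-order and higher-order terms that do not affect the sign.

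Next I would compute $\alpha_\sigma$ for a pure quadratic. Since quadratic WALF reproduces quadratics exactly on a plane (each local WLS fit of degree $\geq 2$ is exact, and the convex combination \eqref{eq:WALF} of exact values is exact), we get $g_{\sigma,2} = f(\boldsymbol{c}_\sigma)$ where $\boldsymbol{c}_\sigma$ is the averaged cell center. On the other hand, $g_{\sigma,1}$ is the linear interpolant evaluated at $\boldsymbol{c}_\sigma$, i.e. the average of the nodal values $f(\boldsymbol{x}_i)$ over the vertices of $\sigma$. Hence for $f(\boldsymbol{x}) = \frac12 \boldsymbol{x}^T \boldsymbol{H} \boldsymbol{x}$ (dropping the irrelevant affine part, which $g_{\sigma,1}$ and $g_{\sigma,2}$ both reproduce and which therefore cancels in $\alpha_\sigma$),
\begin{equation}
\alpha_\sigma = \tfrac12 \boldsymbol{c}_\sigma^T \boldsymbol{H} \boldsymbol{c}_\sigma - \frac{1}{m}\sum_{i=1}^m \tfrac12 \boldsymbol{x}_i^T \boldsymbol{H} \boldsymbol{x}_i,
\end{equation}
with $\boldsymbol{c}_\sigma = \frac1m \sum_i \boldsymbol{x}_i$. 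By convexity of the quadratic form $\boldsymbol{x} \mapsto \boldsymbol{x}^T \boldsymbol{H} \boldsymbol{x}$ (Jensen's inequality), this is $\leq 0$, with strict inequality when the $\boldsymbol{x}_i$ are not all equal and $\boldsymbol{H}$ is positive definite; in fact a direct computation gives $\alpha_\sigma = -\frac{1}{2m}\sum_{i<j}\ldots$ type expression, which is $-\Theta(h^2)$ since the vertex spread is $\Theta(h)$ and $\boldsymbol{H}$ is bounded away from singular near a strict extremum. The local maximum case is identical with signs reversed.

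Finally I would restore the error terms: the true $f$ equals its second-order Taylor polynomial plus an $\mathcal{O}(h^3)$ remainder, and by Lemma~\ref{lem:wls_accuracy} and Proposition~\ref{prop:smooth-error} the contributions of the gradient term and of the $C^2$ remainder to $\alpha_\sigma$ are $\mathcal{O}(h \cdot h) = \mathcal{O}(h^2)$ from the gradient (which, being $\mathcal{O}(h)$ near the extremum, contributes only $o(h^2)$ once multiplied by the $\Theta(h)$ vertex spread — here I must be a little careful) and $\mathcal{O}(h^3)$ from the cubic remainder, both dominated by the $-\Theta(h^2)$ main term on a sufficiently fine mesh. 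I expect the main obstacle to be precisely this last bookkeeping step: showing that the gradient term's contribution to $\alpha_\sigma$ is genuinely lower order than the Hessian term's $-\Theta(h^2)$. The key observation that makes it work is that near a local minimum $\|\boldsymbol{\nabla} f(\boldsymbol{x}_0)\| = \mathcal{O}(\operatorname{dist}(\boldsymbol{x}_0, \boldsymbol{x}_{\min}))$, and on a sufficiently fine mesh a cell $\sigma$ in the $C^2$ region near the extremum has this distance $\lesssim h$, so the linear part of $f$ on the stencil is $\mathcal{O}(h^2)$ in total variation — but since $g_{\sigma,1}$ and $g_{\sigma,2}$ each reproduce the affine Taylor part of $f$ at $\boldsymbol{x}_0$ exactly, that affine part cancels identically in $\alpha_\sigma$ and only its $\mathcal{O}(h^2)$ deviation from affine (i.e. the Hessian term) survives; what remains to check is just that this surviving term has the claimed sign, which is the convexity argument above. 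I would state the plane result as Proposition~\ref{prop:alpha same sign plane} and defer the surface generalization (via the normal-plane projection $\phi_i$ used in WALF, which is a $C^2$ local diffeomorphism and hence preserves the definiteness of the Hessian up to bounded distortion) to the subsequent discussion.
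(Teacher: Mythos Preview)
Your argument is correct and matches the paper's: both reduce $\alpha_\sigma$ to $f(\boldsymbol{c}_\sigma)-\tfrac{1}{m}\sum_i f(\boldsymbol{x}_i)+O(h^3)$ and then read off the sign from the definiteness of the Hessian near the extremum, the only stylistic difference being that the paper carries the Taylor expansion through explicitly to obtain the closed form $\alpha_\sigma=-\tfrac{1}{18}\sum_{i=1}^{3}\boldsymbol{\mu}_i^{T}\tilde{\boldsymbol{H}}\boldsymbol{\mu}_i+O(h^3)$ (for triangles), whereas you invoke Jensen's inequality for the convex quadratic. Your detour about the gradient term is unnecessary: the affine Taylor part cancels in $\alpha_\sigma$ identically (as you eventually note), so proximity to the extremum enters only through the definiteness of the Hessian, not through the size of $\boldsymbol{\nabla} f$.
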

\begin{proof}
Without loss of generality, we assume the mesh is composed of triangles.
Consider a function $f:\mathbb{R}^{2}\rightarrow\mathbb{R}$ defined
on a plane. For a triangular element $\sigma=\boldsymbol{x}_{1}\boldsymbol{x}_{2}\boldsymbol{x}_{3}$
within the $C^{2}$ region of $f$, we obtain the linear interpolation
$g_{\sigma,1}$ and the WALF reconstructed value $g_{\sigma,2}$ from
(\ref{eq:WALF}) as follows: 
\[
g_{\sigma,1}=\frac{1}{3}\sum_{i=1}^{3}f(\boldsymbol{x}_{i})\quad\textrm{and}\quad g_{\sigma,2}=\frac{1}{3}\sum_{i=1}^{3}f_{i}(\boldsymbol{x}_{0}),
\]
where $\boldsymbol{x}_{0}=\frac{1}{3}(\boldsymbol{x}_{1}+\boldsymbol{x}_{2}+\boldsymbol{x}_{3})$
is the cell center of $\sigma$ and $f_{i}$ is the degree-$2$ WLS
centered at $\boldsymbol{x}_{i}$. Hence, we obtain
\begin{align}
\alpha_{\sigma} & =g_{\sigma,2}-g_{\sigma,1}\nonumber \\
 & =\frac{1}{3}\sum_{i=1}^{3}f_{i}(\boldsymbol{x}_{0})-\frac{1}{3}\sum_{i=1}^{3}f(\boldsymbol{x}_{i})\nonumber \\
 & =\frac{1}{3}\sum_{i=1}^{3}(f_{i}(\boldsymbol{x}_{0})-f(\boldsymbol{x}_{0}))+f(\boldsymbol{x}_{0})-\frac{1}{3}\sum_{i=1}^{3}f(\boldsymbol{x}_{i})\nonumber \\
 & =f(\boldsymbol{x}_{0})-\frac{1}{3}\sum_{i=1}^{3}f(\boldsymbol{x}_{i})+O(h^{3}).\label{eq:alpha=00003Ddiff f}
\end{align}
The last equality above holds because of Lemma~\ref{lem:wls_accuracy}.
Let $(\xi,\eta)$ denote the natural coordinates of $\sigma$. Then,
\[
\boldsymbol{x}(\xi,\eta)=\boldsymbol{x}_{1}+\xi\boldsymbol{\mu}_{1}+\eta\boldsymbol{\mu}_{2},
\]
where $\boldsymbol{\mu}_{1}=\boldsymbol{x}_{2}-\boldsymbol{x}_{1}$,
$\boldsymbol{\mu}_{2}=\boldsymbol{x}_{3}-\boldsymbol{x}_{1}$ are
the two edges of $e$. The Taylor series expansion \citep{humpherys2017foundations}
of $f$ on $\boldsymbol{x}_{1}$ is 
\begin{equation}
f(\boldsymbol{x}(\xi,\eta))=f(\boldsymbol{x}_{1})+(\xi\boldsymbol{\mu}_{1}+\eta\boldsymbol{\mu}_{2})^{T}\nabla f+\frac{1}{2}(\xi\boldsymbol{\mu}_{1}+\eta\boldsymbol{\mu}_{2})^{T}\tilde{\boldsymbol{H}}(\xi\boldsymbol{\mu}_{1}+\eta\boldsymbol{\mu}_{2})+O(h^{3}),\label{eq:taylor tilde f}
\end{equation}
where $\tilde{\boldsymbol{H}}$ is the Hessian matrix of $f$ at $\boldsymbol{x}_{1}$.
Substituting the terms in (\ref{eq:alpha=00003Ddiff f}) with (\ref{eq:taylor tilde f}),
we obtain
\begin{align}
\alpha_{\sigma} & =\frac{1}{2}\left[(\frac{1}{3}\boldsymbol{\mu}_{1}+\frac{1}{3}\boldsymbol{\mu}_{2})^{T}\tilde{\boldsymbol{H}}(\frac{1}{3}\boldsymbol{\mu}_{1}+\frac{1}{3}\boldsymbol{\mu}_{2})-\frac{1}{3}\boldsymbol{\mu}_{1}^{T}\tilde{\boldsymbol{H}}\boldsymbol{\mu}_{1}-\frac{1}{3}\boldsymbol{\mu}_{2}^{T}\tilde{\boldsymbol{H}}\boldsymbol{\mu}_{2}\right]+O(h^{3})\nonumber \\
 & =-\frac{1}{18}(\boldsymbol{\mu}_{1}^{T}\tilde{\boldsymbol{H}}\boldsymbol{\mu}_{1}+\boldsymbol{\mu}_{2}^{T}\tilde{\boldsymbol{H}}\boldsymbol{\mu}_{2}+\boldsymbol{\mu}_{3}^{T}\tilde{\boldsymbol{H}}\boldsymbol{\mu}_{3})+O(h^{3}),\label{eq:alpha plane uhu}
\end{align}
where $\boldsymbol{\mu}_{3}\coloneqq\boldsymbol{\mu}_{2}-\boldsymbol{\mu}_{1}=\boldsymbol{x}_{3}-\boldsymbol{x}_{2}$
is the third edge of $e$. Note that $\boldsymbol{\mu}_{i}=O(h)$,
so the first term in (\ref{eq:alpha plane uhu}) is the dominant term.
The Hessian matrix is positive definite near a local minimum and negative-definite
near a local maximum, so $\alpha_{\sigma}$ will be negative and positive,
respectively, when $h$ is sufficiently small.
\end{proof}
In Section~\ref{subsec:Computing-node-based-oscillation}, we will
utilize the property above to differentiate the smooth region near
extremes from discontinuities with alternating signs of $\alpha_{\sigma}$.
To apply it to surface meshes, however, we need to generalize the
result to surfaces. We assume the vertices interpolate the piecewise
smooth surface.
\begin{prop}
\label{prop:alpha same sign general surface}Within the intersection
of $C^{2}$ regions of a piecewise smooth function and $G^{2}$ regions
of the surface, the $\alpha_{\sigma}$ are negative near a local minimum
and positive near a local maximum, provided that the mesh of the surface
is sufficiently fine. 
\end{prop}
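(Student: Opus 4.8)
The plan is to reduce the surface case to the planar case of Proposition~\ref{prop:alpha same sign plane} by working in the local $uv$-coordinate system used by WALF. The key observation is that $\alpha_\sigma = g_{\sigma,2} - g_{\sigma,1}$ is still, up to $O(h^3)$, the difference between $f$ evaluated at the cell center and the average of the nodal values, exactly as in (\ref{eq:alpha=00003Ddiff f}); this identity only used Lemma~\ref{lem:wls_accuracy}, which holds for the projected function $f_i(\boldsymbol{u})$ on the normal plane at each $\boldsymbol{x}_i$ as long as that function is $C^2$. So the first step is to verify that $G^2$ regularity of the surface plus $C^2$ regularity of $f$ on the surface implies that the pulled-back function $f_i \circ \phi_i^{-1}$ is $C^2$ in the $uv$-plane with bounded second derivatives, so that each local WLS fit $f_i$ is third-order accurate at $\boldsymbol{x}_0$.

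Next I would redo the Taylor expansion (\ref{eq:taylor tilde f})--(\ref{eq:alpha plane uhu}) but now for the pulled-back function in the $uv$-parameterization. Writing $\boldsymbol{u}_i = \phi_i(\boldsymbol{x}_i)$ for the three vertex images and $\boldsymbol{u}_0$ for the image of the cell center, the edge vectors $\boldsymbol{\nu}_1 = \boldsymbol{u}_2 - \boldsymbol{u}_1$, $\boldsymbol{\nu}_2 = \boldsymbol{u}_3 - \boldsymbol{u}_1$ are each $O(h)$, and the same algebra that produced (\ref{eq:alpha plane uhu}) gives
\[
\alpha_\sigma = -\tfrac{1}{18}\left(\boldsymbol{\nu}_1^T \tilde{\boldsymbol{H}} \boldsymbol{\nu}_1 + \boldsymbol{\nu}_2^T \tilde{\boldsymbol{H}} \boldsymbol{\nu}_2 + \boldsymbol{\nu}_3^T \tilde{\boldsymbol{H}} \boldsymbol{\nu}_3\right) + O(h^3),
\]
where $\tilde{\boldsymbol{H}}$ is now the Hessian of the pulled-back function at $\boldsymbol{u}_1$ and $\boldsymbol{\nu}_3 = \boldsymbol{\nu}_2 - \boldsymbol{\nu}_1$. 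One subtlety here is that the cell center was defined as the average of the nodes $\boldsymbol{x}_i$ in $xyz$-space, so $\boldsymbol{u}_0 = \phi_i\big(\tfrac{1}{3}\sum_j \boldsymbol{x}_j\big)$ need not equal $\tfrac{1}{3}\sum_j \boldsymbol{u}_j$; however, since $\phi_i$ is the projection onto a plane and the surface is $G^2$, the discrepancy $\boldsymbol{u}_0 - \tfrac{1}{3}\sum_j \boldsymbol{u}_j$ is $O(h^2)$ (the surface deviates from its tangent plane only quadratically), which feeds into the $\alpha_\sigma$ expression a term of size $O(h) \cdot O(h^2) = O(h^3)$ and is therefore absorbed into the remainder.

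Finally I would argue the sign. Near a local minimum of $f$ on the surface, at an interior critical point the restriction of $f$ to the surface has a positive-definite Hessian in any regular parameterization, and pulling back through $\phi_i^{-1}$ preserves positive-definiteness because it is a local diffeomorphism (the differential is invertible on $G^1$ surfaces with the stated normal-plane projection). Hence $\tilde{\boldsymbol{H}}$ is positive definite, the three quadratic forms $\boldsymbol{\nu}_k^T \tilde{\boldsymbol{H}} \boldsymbol{\nu}_k$ are each nonnegative and, since the $\boldsymbol{\nu}_k$ cannot all vanish (the triangle is nondegenerate), their sum is $\Theta(h^2)$ and strictly positive; the leading term of $\alpha_\sigma$ is thus strictly negative, and it dominates the $O(h^3)$ remainder once $h$ is small enough. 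The maximum case is symmetric. I should also note that "near a local minimum/maximum" must be interpreted uniformly — the Hessian's smallest eigenvalue is bounded below by a positive constant on a neighborhood — so that the threshold on $h$ can be chosen uniformly over that neighborhood; this is the same caveat as in Proposition~\ref{prop:alpha same sign plane}.

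The main obstacle, and the only place where real work beyond bookkeeping is needed, is controlling the interplay between the three different coordinate systems: the surface's own geometry, the $uv$-plane of each vertex's normal projection, and the mismatch between the $xyz$-averaged cell center and its $uv$-image. I expect the cleanest route is to show that all the geometric distortions ($\phi_i$ versus identity, cell-center mismatch, difference between the three vertices' local frames) contribute only at order $O(h^3)$ to $\alpha_\sigma$, so that the dominant $-\tfrac{1}{18}\sum_k \boldsymbol{\nu}_k^T \tilde{\boldsymbol{H}} \boldsymbol{\nu}_k$ term carries the sign exactly as in the planar proof. Making "$G^2$ region of the surface" precise enough to license these $O(h^2)$ tangent-plane-deviation estimates is the one definitional point that must be pinned down.
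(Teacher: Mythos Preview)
Your approach is sound and reaches the same conclusion as the paper, but the routing differs. You first establish the surface analogue of (\ref{eq:alpha=00003Ddiff f}) and then pick a single chart in which to Taylor-expand and read off the sign from one Hessian. The paper instead keeps all three vertex charts in play throughout: it rewrites
\[
\alpha_\sigma=\frac{1}{3}\sum_{k=1}^{3}\Bigl(\tilde f_k(\boldsymbol{x}_0^k)-\tfrac{1}{3}\sum_{i}\tilde f_k(\boldsymbol{x}_i^k)\Bigr)-\frac{1}{3}\sum_{i=1}^{3}\Bigl(f(v_i)-\tfrac{1}{3}\sum_{k}\tilde f_k(\boldsymbol{x}_i^k)\Bigr),
\]
bounds the second group by $O(h^3)$ via Lemma~\ref{lem:wls_accuracy}, and applies the planar formula (\ref{eq:alpha plane uhu}) separately to each summand in the first group, obtaining three Hessians $\tilde{\boldsymbol H}_k$ that are simultaneously definite near the extremum. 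The advantage of the paper's decomposition is that each bracket lives entirely inside one chart, so the cross-chart discrepancies you flag as the ``main obstacle'' (the difference between the three local frames, the $xyz$-versus-$uv$ centroid mismatch) never have to be estimated on their own---they are absorbed into the $O(h^3)$ correction term automatically. Your route works too, and your attention to the centroid mismatch and to the role of $G^2$ regularity is well placed; it simply front-loads the geometric bookkeeping that the paper's three-chart decomposition sidesteps.
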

\begin{proof}
Let us consider a function $f:\Omega\rightarrow\mathbb{R}$ defined
on a general surface $\Omega$. Within a $G^{2}$ region of $\Omega$,
suppose there is a triangular element $\sigma=v_{1}v_{2}v_{3}$. We
construct three smooth projections, $p_{k}:\mathbb{R}^{3}\rightarrow\Omega$,
from the normal plane of $v_{k}$ to a small neighborhood $\Phi_{k}$
of $v_{k}$ for $k=1,2,3$. We assume the mesh is fine enough so that
$\sigma\subset\Phi_{k},k=1,2,3$. Additionally, we construct another
projection $q_{k}:\mathbb{R}^{2}\rightarrow\mathbb{R}^{3}$ from $\mathbb{R}^{2}$
to the normal plane, keeping the distance preserved. That is, 
\[
\left\Vert \boldsymbol{x}_{1}-\boldsymbol{x}_{2}\right\Vert _{2}=\left\Vert q_{k}(\boldsymbol{x}_{1})-q_{k}(\boldsymbol{x}_{2})\right\Vert _{2},\forall\boldsymbol{x}_{1},\boldsymbol{x}_{2}\in\mathbb{R}^{2},1\leq k\leq3.
\]
Let $\boldsymbol{x}_{i}^{k}$ be the corresponding vertex of $v_{i}$
for $p_{k}$ and $q_{k}$ on $\mathbb{R}^{2}$. It then follows that
$p_{k}(q_{k}(\boldsymbol{x}_{i}^{k}))=v_{i}$ for $i=1,2,3$. The
WALF reconstructed value is then given by 
\[
g_{\sigma,2}=\frac{1}{3}\sum_{i=1}^{3}f_{i}(v_{0}),
\]
where $v_{0}$ is the cell center of $\sigma$, and $f_{i}$ is the
degree-$2$ WLS centered at $v_{i}$. Let $\tilde{f}_{k}(\boldsymbol{x})\coloneqq f_{k}(p_{k}(q_{k}(\boldsymbol{x})))$,
and $\alpha_{\sigma}$ is then given by
\begin{align}
\alpha_{\sigma} & =g_{\sigma,2}-g_{\sigma,1}\nonumber \\
 & =\frac{1}{3}\sum_{k=1}^{3}f_{k}(v_{0})-\frac{1}{3}\sum_{i=1}^{3}f(v_{i})\nonumber \\
 & =\frac{1}{3}\sum_{k=1}^{3}(\tilde{f}_{k}(\boldsymbol{x}_{0}^{k})-\frac{1}{3}\sum_{i=1}^{3}\tilde{f}_{k}(\boldsymbol{x}_{i}^{k}))-\frac{1}{3}\sum_{i=1}^{3}(f(v_{i})-\frac{1}{3}\sum_{k=1}^{3}\tilde{f}_{k}(\boldsymbol{x}_{i}^{k})),\label{eq:alpha in general surface}
\end{align}
where $\boldsymbol{x}_{0}^{k}=\frac{1}{3}(\boldsymbol{x}_{1}^{k}+\boldsymbol{x}_{2}^{k}+\boldsymbol{x}_{3}^{k}),\forall1\leq k\leq3$.
Since the vertices are on the piecewise smooth surface, we can apply
Lemma~\ref{lem:wls_accuracy} and conclude that the second term in
(\ref{eq:alpha in general surface}) is $O(h^{3})$.

Following the argument as in Proposition~(\ref{prop:alpha same sign plane}),
we then get:
\begin{equation}
\tilde{f}_{k}(\boldsymbol{x}_{0}^{k})-\frac{1}{3}\sum_{i=1}^{3}\tilde{f}_{k}(\boldsymbol{x}_{i}^{k})=-\frac{1}{18}\sum_{i=1}^{3}(\boldsymbol{\mu}_{i}^{k})^{T}\tilde{\boldsymbol{H}_{k}}\boldsymbol{\mu}_{i}^{k}+O(h^{3}),\label{eq:alpha surface uhu}
\end{equation}
where $\tilde{\boldsymbol{H}_{k}}$ is the Hessian matrix for $\tilde{f}_{k}$
at $\boldsymbol{x}_{k}^{k}$ and $\boldsymbol{\mu}_{i}^{k}$ are the
edges for the triangle in the domain of $\tilde{f}_{k}$. The local
extremes of $f$ in $\Omega$ correspond to local extremes of $\tilde{f}_{k}$
in $\mathbb{R}^{2}$. Hence, the $\tilde{\boldsymbol{H}}_{k}$ are
either all positive definite or all negative definite, at the same
time. Therefore, the proposition has been proved.
\end{proof}

\subsection{Computing node-based oscillation indicators \label{subsec:Computing-node-based-oscillation}}

\subsubsection*{Definition of oscillation indicator}

Given the nodal values of a function $f$, we first calculate of the
cell-based $\alpha$ values by multiplying the OSUS operator with
a vector composed of nodal function values. However, since the $\mathcal{O}(1)$
and $\mathcal{O}(h)$ estimates in Proposition~\ref{prop:discon-error}
merely provide the upper bounds of $\alpha_{\sigma}$, these values
may be arbitrarily small at some isolated cells near discontinuities.
As a result, the OSUS indicators alone are insufficiently reliable
for detecting discontinuities. To enhance robustness, we further process
the OSUS indicators to generate node-based \emph{oscillation indicators}. 

Specifically, for a given node $v$, we compute its oscillation indicator
from the adjacent nodes using the formula:
\begin{equation}
\beta_{v}=\frac{\sum_{\sigma\ni v}w_{\sigma}\vert\alpha_{\sigma}-\bar{\alpha}_{v}\vert}{\left(\sum_{\sigma\ni v}w_{\sigma}\right)(\left|\bar{\alpha}_{v}\right|+\epsilon_{\beta}\max_{\sigma\ni v}|\alpha_{\sigma}|)+\epsilon_{\text{min}}}.\label{eq:node-based-indicator}
\end{equation}
Here, $\bar{\alpha}_{v}$ denotes a weighted average of the $\alpha$
values for the cells incident on $v$, i.e.,
\[
\bar{\alpha}_{v}=\sum_{\sigma\ni v}w_{\sigma}\alpha_{\sigma}/\sum_{\sigma\ni v}w_{\sigma},
\]
where the weights may be unit weights as \citep{li2020wls} or area-based
weights. The denominator in (\ref{eq:node-based-indicator}) comprises
safeguards (namely, the second and third terms) against division by
a value that is too small, which may happen if the function is locally
linear.  $\epsilon_{\beta}$ is a small number, which can be set
to $10^{-3}$ in practice. The last term $\text{\ensuremath{\epsilon_{\text{min}}}}$
in the denominator, which is the smallest positive floating-point
number (approximately $2.2\times10^{-308}$ for double-precision floating-point
numbers), serves as an additional safeguard against division by zero
if the input function is a constant. It should be noted that $\beta_{v}$
is non-dimensional and is independent of the function value and the
mesh scale.

From both a practical and numerical perspective, the definition of
$\beta_{v}$ is important. Computationally, given the element-based
$\alpha$ values, it is efficient to calculate the node-based $\beta$
values. Numerically, the definition of $\beta_{v}$ warrants justification.
Suppose that $f$ is sufficiently nonlinear such that $\beta_{v}\approx\sum_{\sigma\ni v}w_{\sigma}\vert\alpha_{\sigma}-\bar{\alpha}_{v}\vert/\left|\sum_{\sigma\ni v}w_{\sigma}\alpha_{\sigma}\right|$.
Firstly, this quantity no longer depends on $h$. Secondly, the numerator
of $\beta_{v}$ captures the variation of $\alpha_{\sigma}$, and
the denominator further intensifies this variation when $\alpha_{\sigma}$
alternates in sign, particularly near $C^{0}$ discontinuities. These
insights will be useful in the development of an effective thresholding
strategy. 

\subsubsection*{Derivation of node-based markers via dual thresholding}

As mentioned in Section~\ref{subsec:Related-work}, Gibbs phenomena
may occur for degree-$2$ polynomial fitting near $C^{0}$ and $C^{1}$
discontinuities, leading to oscillations. Linear interpolation would
not introduce any $O(1)$ oscillation. Hence, $\alpha_{\sigma}$ would
be positive in the overshoot region and negative in the undershoot
region. Consequently, $\sum_{\sigma\ni v}w_{\sigma}\alpha_{\sigma}$
would largely cancel out and be close to $0$ in comparison to $\sum_{\sigma\ni v}w_{\sigma}|\alpha_{\sigma}|$.
As a result, $\beta_{v}$ would be large near the discontinuities.
In contrast, Proposition~\ref{prop:alpha same sign general surface}
suggests that all the $\alpha_{\sigma}$ around $v$ would have the
same sign near a local extremum of $f$ in the smooth region, resulting
in a small $\beta_{v}$. In terms of implementation, we use the 1-ring
neighborhood to evaluate $\beta$ from $\alpha$. While using a larger
neighborhood may improve the precision of the indicator, it will increase
computational cost.

To distinguish between discontinuities, we use threshold $\kappa$
for $\beta_{v}$ and devise a \emph{dual-thresholding} strategy. Specifically,
we mark a node $v$ as discontinuity if it has a large $\beta$ value
and at least one of its incident elements has a large $\alpha$ value,
i.e., 
\[
\text{\ensuremath{\eta}}_{v}=\begin{cases}
1 & \text{if }\ensuremath{\beta_{v}>\kappa}\text{ and }\exists e\ni v\text{ s.t. }\alpha_{\sigma}>\tau_{\sigma}\\
0 & \text{otherwise}
\end{cases},
\]
where $\kappa$ is the threshold for $\beta_{v}$ and $\tau_{\sigma}$
is the threshold in Section~\ref{subsec:Precomputing-operator-for}.

The threshold $\kappa$ is determined empirically, selected from the
interval $[0.2,0.5]$ for $C^{1}$ discontinuities and $[1,2]$ for
$C^{0}$ discontinuities. A lower $\kappa$ value will result in a
more sensitive indicator but may introduce a higher number of false
positives. On the other hand, a higher $\kappa$ value will yield
a stricter indicator, potentially leading to an increase in false
negatives. Users can adjust $\kappa_{0}$ and $\kappa_{1}$ for $C^{0}$
and $C^{1}$ discontinuities, respectively, according to their specific
tolerance of false positives and false negatives.

\subsubsection*{Area weights for non-uniform meshes}

On nonuniform meshes, we observe that utilizing unit weights, as in
\citep{li2020wls} may result in many false positives. For instance,
let us assume we have a coarse element $\sigma_{0}$ and several fine
elements $\sigma_{k},k=1,2,...,n$ around $v$ at the intersection
of $C^{2}$ continuous region of $f$ and $G^{2}$ regions of the
surface. Assume $\left\Vert \boldsymbol{u}_{i}^{0}\right\Vert =O(h)$
and $\left\Vert \boldsymbol{u}_{i}^{k}\right\Vert =o(h)$ for all
$1\leq k\leq n$, where $\boldsymbol{u}_{i}^{k}$ are the edges of
element $\sigma_{k}$ for all $0\leq k\leq n,1\leq i\leq3$. Based
on (\ref{eq:alpha in general surface}) and (\ref{eq:alpha surface uhu}),
we obtain $\alpha_{\sigma_{0}}=O(h^{2})$ and $\alpha_{\sigma_{k}}=o(h^{2})$
for all $1\leq k\leq n$. With unit weights, we get $\bar{\alpha}_{v}\approx\frac{1}{n+1}\alpha_{\sigma_{0}}$,
which leads to
\[
\beta_{v}\approx\frac{\sum_{k=0}^{n}|\alpha_{\sigma_{k}}-\bar{\alpha}_{v}|}{|\sum_{k=0}^{n}\alpha_{\sigma_{k}}|}\approx\frac{|\alpha_{\sigma_{0}}-\bar{\alpha}_{v}|+\sum_{k=1}^{n}|\bar{\alpha}_{v}|}{|\alpha_{\sigma_{0}}|}\approx\frac{2n}{n+1}.
\]
In this case, $v$ could erroneously be identified as a $C^{1}$ discontinuity
node with such a large $\beta_{v}$. 

To mitigate such false positives, we employ area weights. Given $w_{\sigma_{0}}=O(h^{2})$
and $w_{\sigma_{k}}=o(h^{2})$ for all $1\leq k\leq n$, we have $\bar{\alpha}_{v}\approx\alpha_{\sigma_{0}}$,
yielding
\[
\beta_{v}\approx\frac{w_{\alpha_{0}}|\alpha_{\sigma_{0}}-\bar{\alpha}_{v}|+\sum_{k=1}^{n}w_{\alpha_{k}}|\bar{\alpha}_{v}|}{|w_{\alpha_{0}}\alpha_{\sigma_{0}}|}=o(1).
\]
As a result, $\beta_{v}$ is close to $0$, thereby preventing the
false positive.

\subsection{\label{subsec:Generalization-to-surfaces}Generalization to surfaces
with geometric discontinuities}

Our preceding analysis assumed $G^{2}$ continuity of the surface,
and hence is not applicable when dealing with geometric discontinuities
in normals and curvatures. To accommodate these geometric discontinuities
during the detection of numerical discontinuities, we employ virtual
splitting as presented in \citep{li2019compact} along the ridge curves.
Virtual splitting makes the twin half-edges along ridge curves appear
disconnected in the half-facet data structure and in turn, creates
a separate connected component for each smooth region. As a result,
the stencils in the WLS computations will not include points from
different sides of geometric discontinuities. In this work, we focus
on discontinuities in normal directions, such as sharp ridges and
corners. For instance, after the virtual splitting of a cylinder,
the two bases and the lateral surface become three distinct connected
components. We then separately run our RDI on these surface patches
with boundaries. Without virtual splitting, using RDI might introduce
many false positives and negatives, as we demonstrate in \ref{sec:virtual split and geometric discontinuities}.

Close to the geometric boundary, whether originally present or created
by virtual splitting, we observed that a 3-ring neighborhood provides
adequate rows in the Vandermonde matrix for stability while computing
the degree-2 WLS in the OSUS operator. Similarly, we could use a 2-ring
neighborhood while computing $\beta_{v}$ for a vertex on the boundary.
These adjustments aid in providing accurate indicator results, as
demonstrated in Section~\ref{subsec:Numerical-results-with-sharp-features}.

\subsection{Summary of algorithm}

We outline the complete algorithm for RDI. Our approach consists of
three stages: (1) computation of element-based discontinuity indicators,
denoted as $\alpha$ values; (2) calculation of node-based indicators,
referred to as $\beta$ values; and (3) acquisition of node-based
discontinuity markers using a novel dual-thresholding strategy.

Our design principle aims to separate offline preprocessing from online
computation, as well as separate the data structures. Given a mesh,
the offline segment involves computing the OSUS operator, which is
performed only when a new mesh is employed. Once the OSUS operator
is generated, we execute the rapid online segment, as outlined in
Algorithm~\ref{alg:Robust-Discontinuities-Indicator}, to detect
discontinuities with varying function values on the mesh.

\begin{algorithm}
\begin{algorithmic}[1]
\Require{A surface mesh $\Gamma=(V,E)$ containing the coordinate of vertices $V$ and connectivity table for elements $E$, the OSUS operator $\bold{O}$ generated from $\Gamma$, function values $\bold{f}$ on all the nodes, two node-based thresholds $\kappa_{0},\kappa_{1}$ for $C^{0}$ and $C^{1}$ discontinuities, $\kappa_{0}>\kappa_{1}$}
\Ensure{A vector of integer $\bold{I}$ indicating discontinuities on all the nodes}
\State{$\boldsymbol{\alpha} \leftarrow \boldsymbol{Of}$}
\State{$I_{v} \leftarrow 0, \forall v\in V$}
\ForAll{element $\sigma\in E$} \Comment{Pre-filtering using $\alpha$}
	\State{Compute the weight $w_{\sigma}$}
	\State{Compute the threshold $\tau_{\sigma}$ in \eqref{eq:threshold-element}}
	\If{$|\alpha_{\sigma}|>\tau_{\sigma}$}
		\State{$I_{v} \leftarrow 1, \forall v \in \sigma$}
	\EndIf
\EndFor		
\ForAll{vertex $v \in V$} \Comment{Resolve false positives}
	\If{$I_{v}=1$}
		\State{Compute $\beta_{v}$ in \eqref{eq:node-based-indicator}}
		\If{$\beta_{v}>\kappa_{0}$}
			\State{$I_{v} \leftarrow 2$}
		\ElsIf{$\beta_{v} \leq \kappa_{1}$}
			\State{$I_{v} \leftarrow 0$}
		\EndIf
	\EndIf
\EndFor
\end{algorithmic}

\caption{Robust Discontinuities Indicator\label{alg:Robust-Discontinuities-Indicator}}

\end{algorithm}

To analyze the time complexity of the algorithm, let us denote the
number of vertices as $N=|V|$ and the number of elements as $M=|E|$
for the complete virtual splitting surface. The offline segment, which
generates the sparse OSUS operator, incurs a time complexity of $O(N+M)$,
counted only once for each mesh.

As for the online part, the complexity of sparse matrix-vector multiplication
to compute $\boldsymbol{\alpha}$ is $O(M)$. The pre-filtering using
$\alpha$ also has a time complexity of $O(M)$. The process of resolving
false positives possesses a time complexity of $O(K)$, where $K$
is the count of elements that pass pre-filtering. When applying a
new function on the same mesh, another online part with complexity
$O(M)$ is required. For detecting discontinuities of $l$ functions
on the same mesh, the overall time complexity is $O(N+(l+1)M)$.

\section{Numerical Results\label{sec:Numerical-Results}}

In this section, we report several numerical experiments conducted
with RDI, including comparisons on planes, evaluations on general
surfaces with sharp features, and the use case of remapping nodal
values between meshes for discontinuous functions. We have implemented
our algorithms in MATLAB and converted the code into C++ using MATLAB
Coder. The numerical experiments were conducted using MATLAB R2022b
on Linux. Since one of the key applications of RDI is remapping in
climate modeling, which tends to employ various representations of
the sphere, some of our experiments use spherical meshes.

\subsection{Experimentation on unit spheres with nonuniform meshes\label{subsec:Experiments-non-uniform mesh}}

To evaluate our discontinuity indicators, we employ two piecewise
smooth functions, ``interacting waves'' and ``crossing waves,''
which were previously used in \citep{li2020wls}. The definitions
of these functions are as follows:
\begin{equation}
f_{1}\left(\theta,\varphi\right)=\begin{cases}
\begin{array}{c}
1\\
1.7-2.52\theta/\pi\\
0.44\\
0.24\\
0.12
\end{array} & \begin{array}{c}
0\leq\theta<5\pi/18\\
5\pi/18\leq\theta<\pi.\\
\pi/2\leq\theta<13\pi/18\\
13\pi/18\leq\theta<9\pi/10\\
9\pi/10\leq\theta\leq\pi
\end{array}\end{cases},\label{eq:f3}
\end{equation}
and
\begin{equation}
f_{2}\left(\theta,\varphi\right)=-1000+2000\textrm{sign}(\cos(\varphi))\begin{cases}
1/2 & 0\leq\theta<\pi/4\\
-4\left(\theta/\pi-1/2\right) & \pi/4\le\theta<\pi/2\\
4\left(\theta/\pi-1/2\right) & \pi/2\le\theta<3\pi/4\\
1 & 3\pi/4\le\theta<7\pi/8\\
-64\theta^{2}/\pi^{2}+112\theta/\pi-48 & 7\pi/8\leq\theta\leq\pi
\end{cases}.\label{eq:f4}
\end{equation}
Note that $f_{1}$ has $C^{0}$ and $C^{1}$ discontinuities, whereas
$f_{2}$ has $C^{0}$, $C^{1}$, and $C^{2}$ discontinuities. Figure~\ref{fig:Function-values}
displays the color maps of the two functions on two regionally refined
meshes. In particular, $f_{1}$ is shown on the dual of a spherical
centroidal Voronoi tessellations (SCVT) mesh \citep{ju2011voronoi},
and $f_{2}$ is shown on a cubed-sphere mesh. The meshes in Figure~\ref{fig:Function-values}
are relatively coarse. For ease of visualization, we intersect the
sphere with planes and will depict the values on the intersection
curves henceforth.

\begin{figure}
\subfloat[$f_{1}$ on the dual of a coarse regionally refined SCVT with the
intersection plane $x+y=0$.]{\includegraphics[width=0.48\columnwidth]{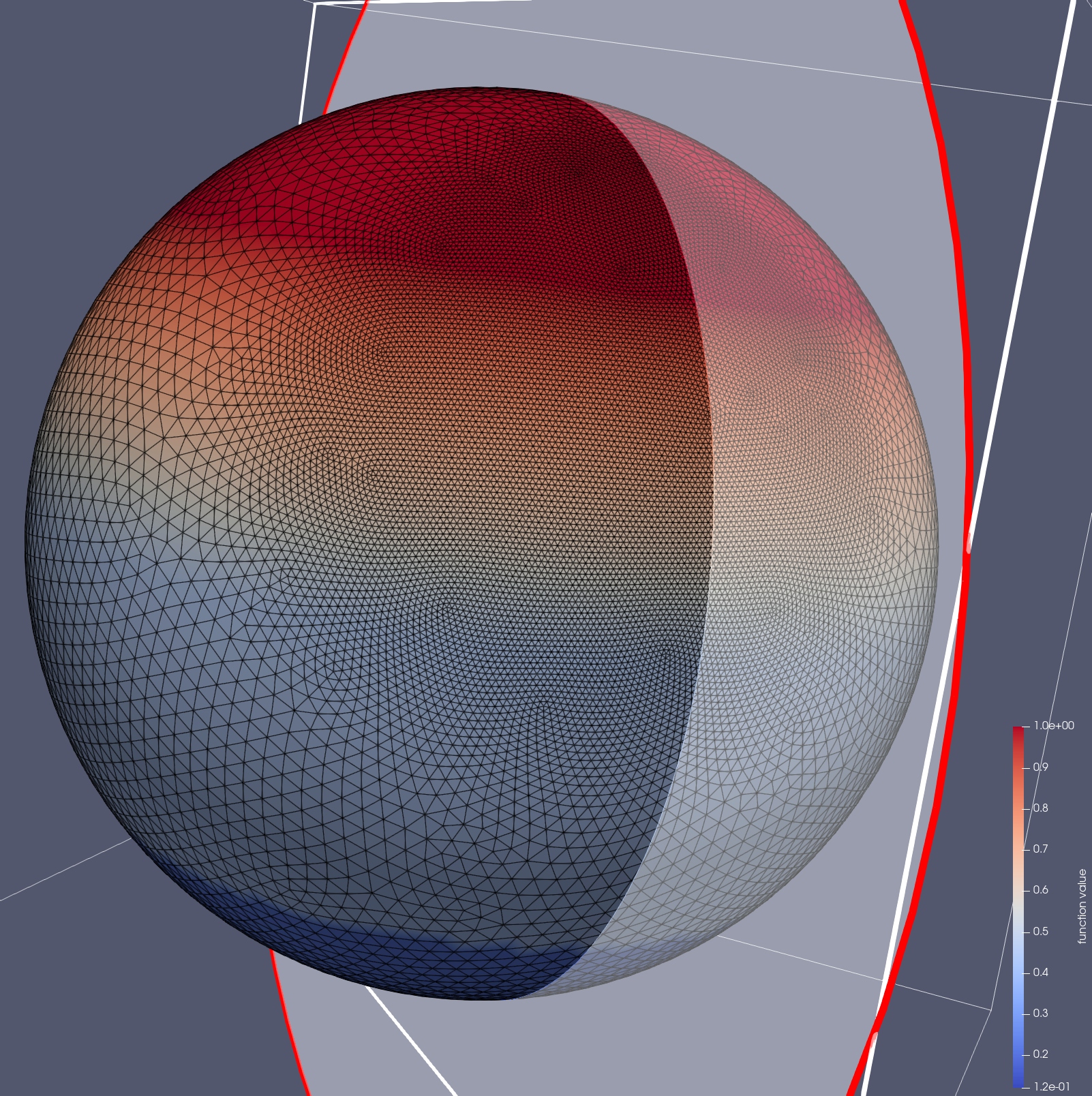}

}$\quad$\subfloat[$f_{2}$ on a coarse regionally refined cubed-sphere with the intersection
plane $y-2x=0$.]{\includegraphics[width=0.48\columnwidth]{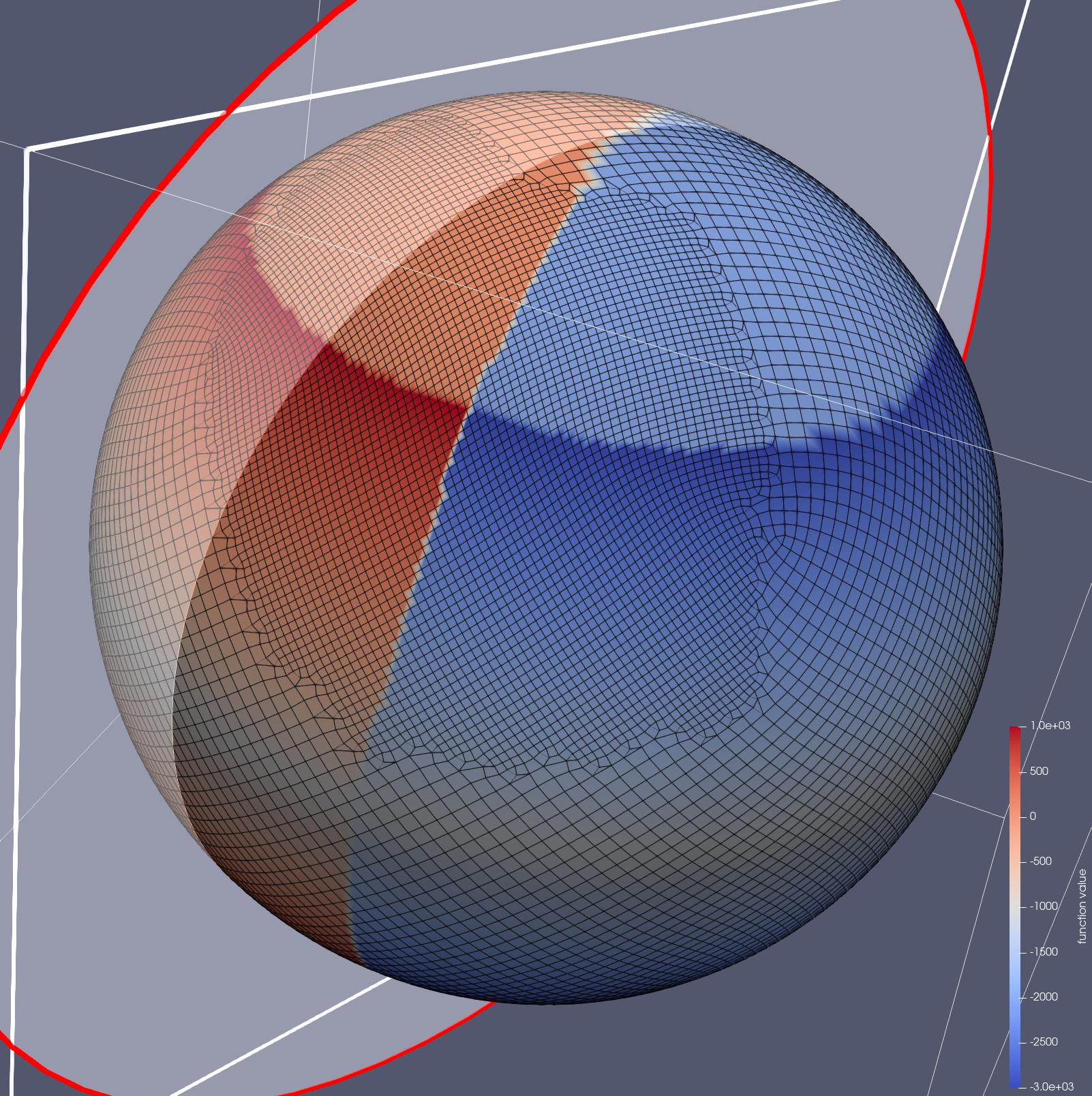}

}

\caption{\label{fig:Function-values}Function values on coarse regionally refined
meshes and the intersection planes.}
\end{figure}
\begin{figure}
\subfloat[$f_{1}$ and corresponding $\beta$ values.]{\begin{centering}
\includegraphics[width=0.48\columnwidth]{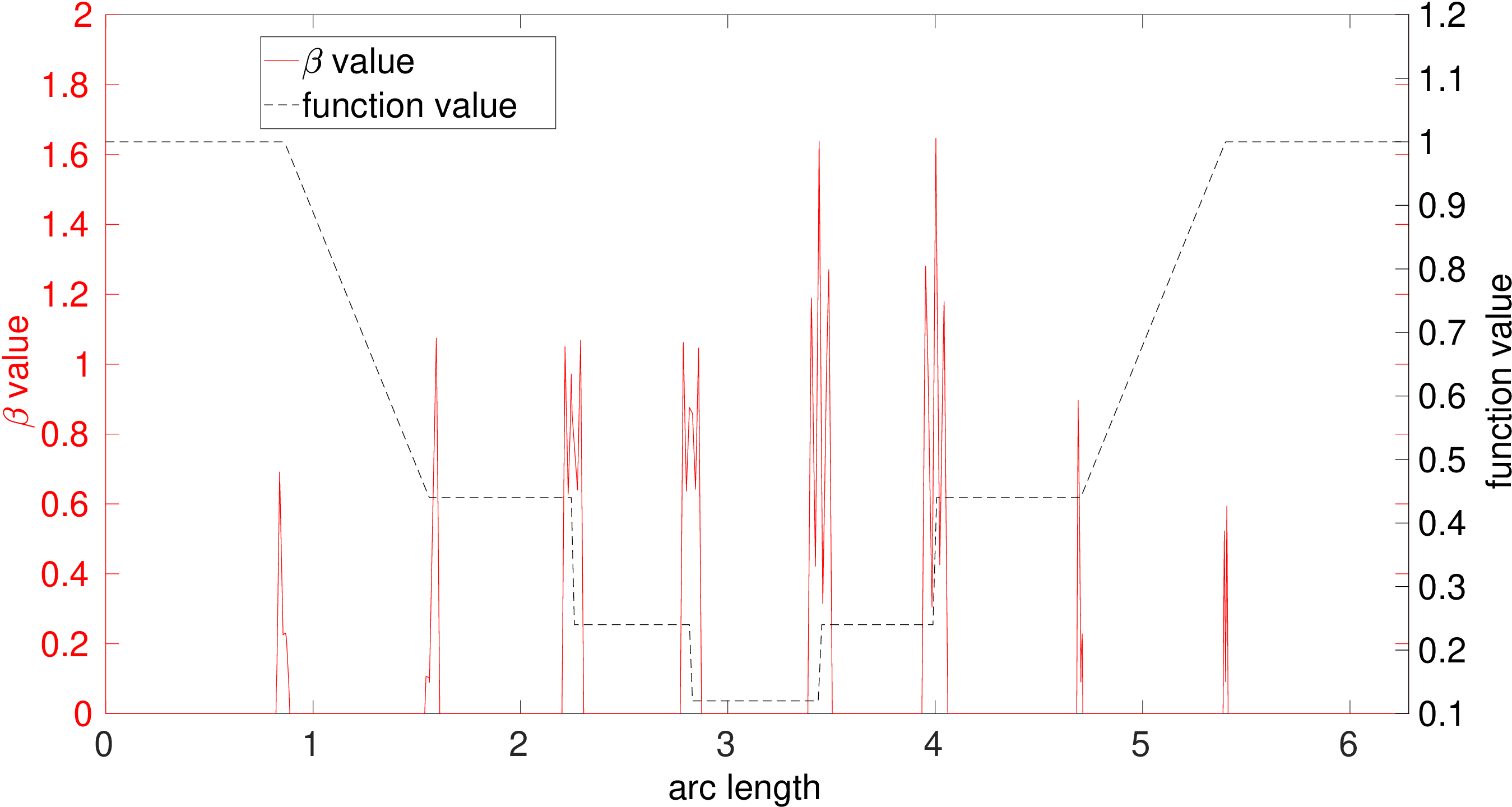}
\par\end{centering}
}\hfill\subfloat[$f_{2}$ and corresponding $\beta$ values.]{\begin{centering}
\includegraphics[width=0.48\columnwidth]{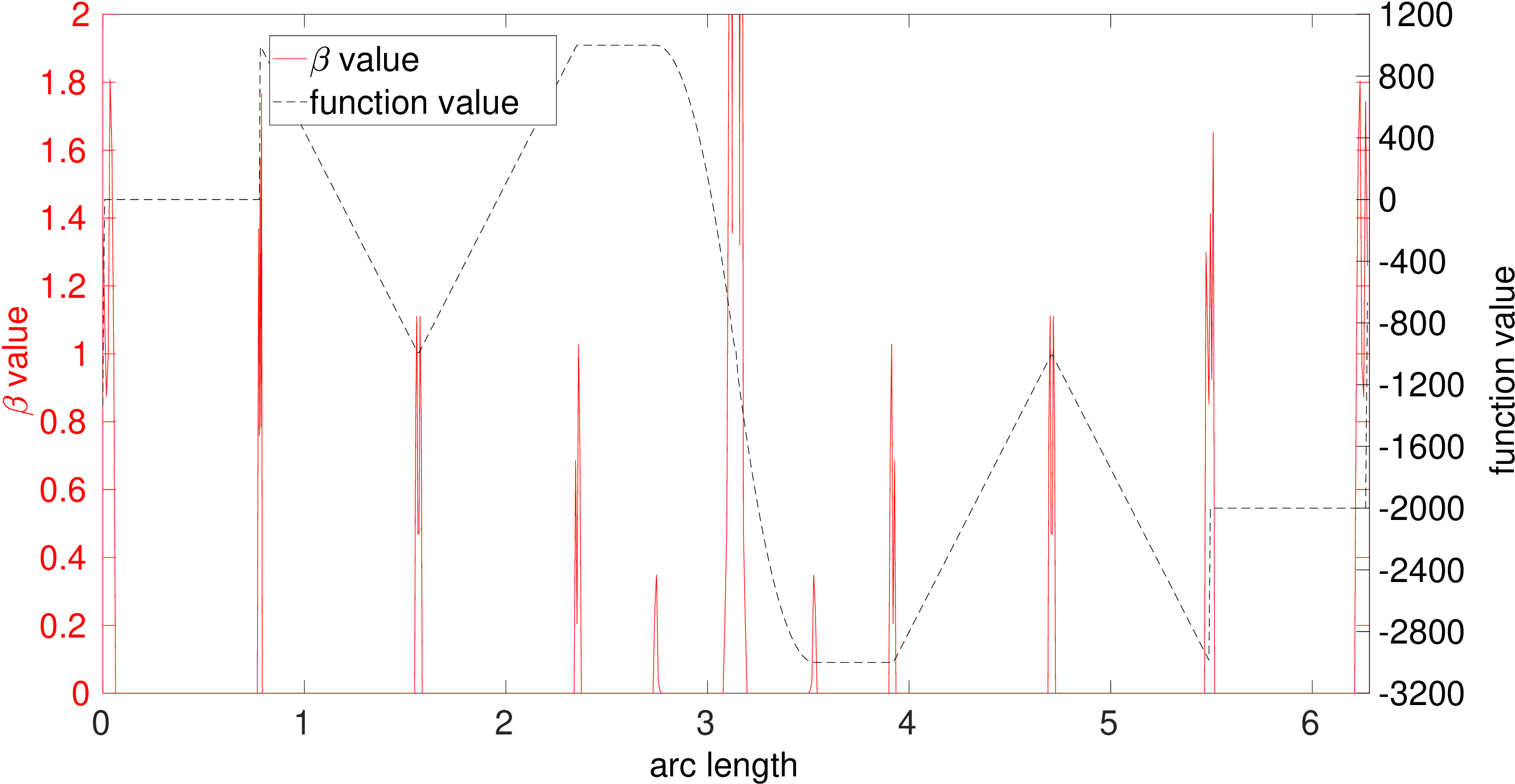}
\par\end{centering}
}

\caption{\label{fig:Example-node-based-indicator-1}Functions $f_{1}$ and
$f_{2}$ along with their corresponding $\beta$ values along the
cross section of finer regionally refined meshes with intersection
planes.}
\end{figure}

Figure~\ref{fig:Example-node-based-indicator-1} depicts the function
values and $\beta$ values along the intersection curves on the finer
meshes, which have edge lengths of approximately one fourth of those
seen in Figure~\ref{fig:Function-values}. It is evident that the
$\beta_{v}$ values can effectively distinguish the discontinuities
from the smooth regions. Figure~\ref{fig:Detected-discontinuity-cells}
shows the nodes where $\beta_{v}\geq0.5$. These nodes correspond
to the regions with $C^{0}$ and $C^{1}$ discontinuities. Hence,
we choose the threshold $\kappa=0.5$ for detecting discontinuities,
as previously mentioned in Section~\ref{subsec:Computing-node-based-oscillation}.
Reducing $\kappa$ to a smaller value may allow for the identification
of some $C^{2}$ discontinuities on coarse meshes but would also increase
the risk of false positiveness.

\begin{figure}
\subfloat[Detected discontinuity cells of $f_{1}$ on level-3 regionally refined
SCVT.]{\includegraphics[width=0.48\columnwidth]{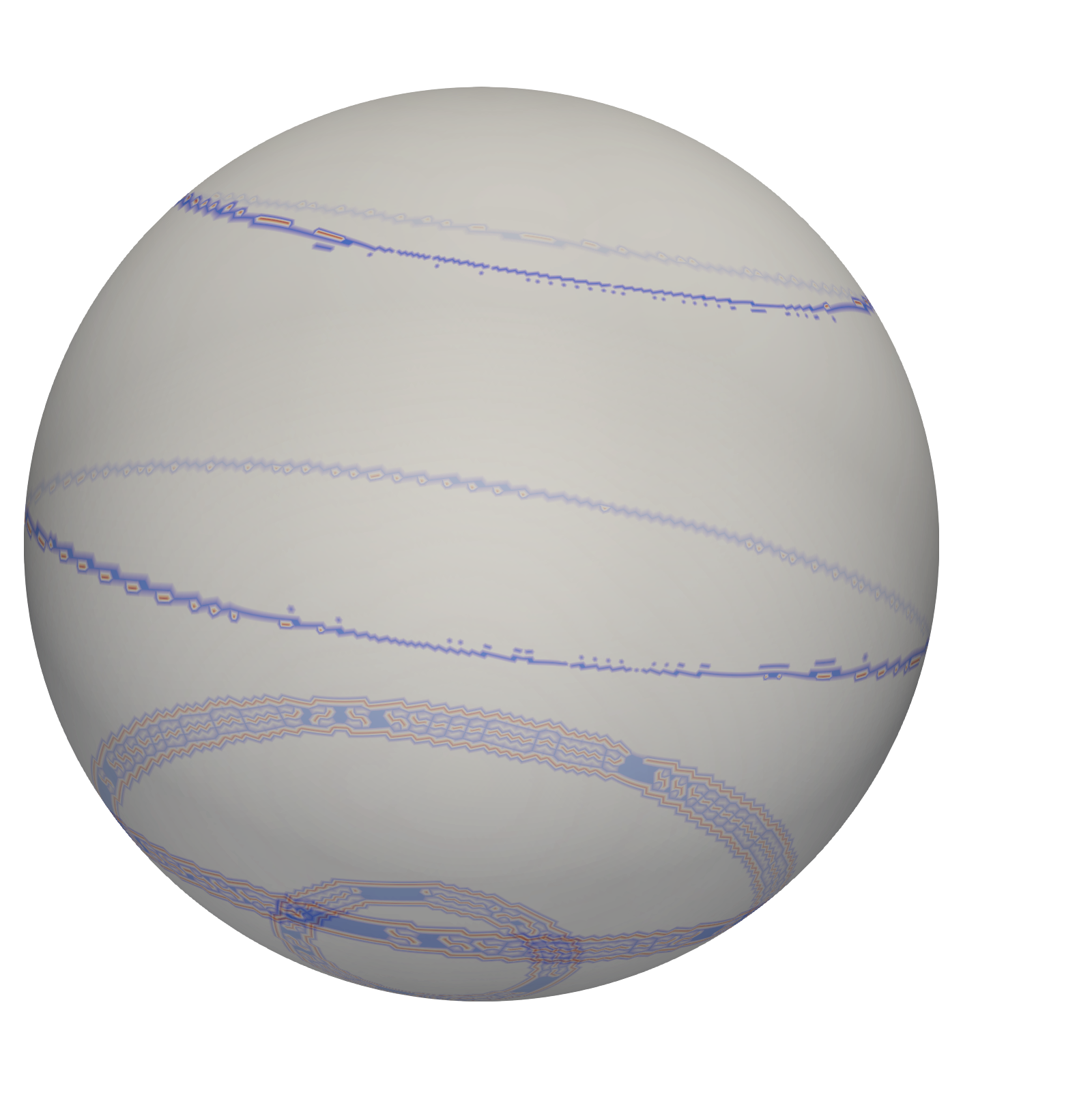}

}$\quad$\subfloat[Detected discontinuity cells of $f_{2}$ on level-3 regionally refined
cubed-sphere mesh.]{\includegraphics[width=0.48\columnwidth]{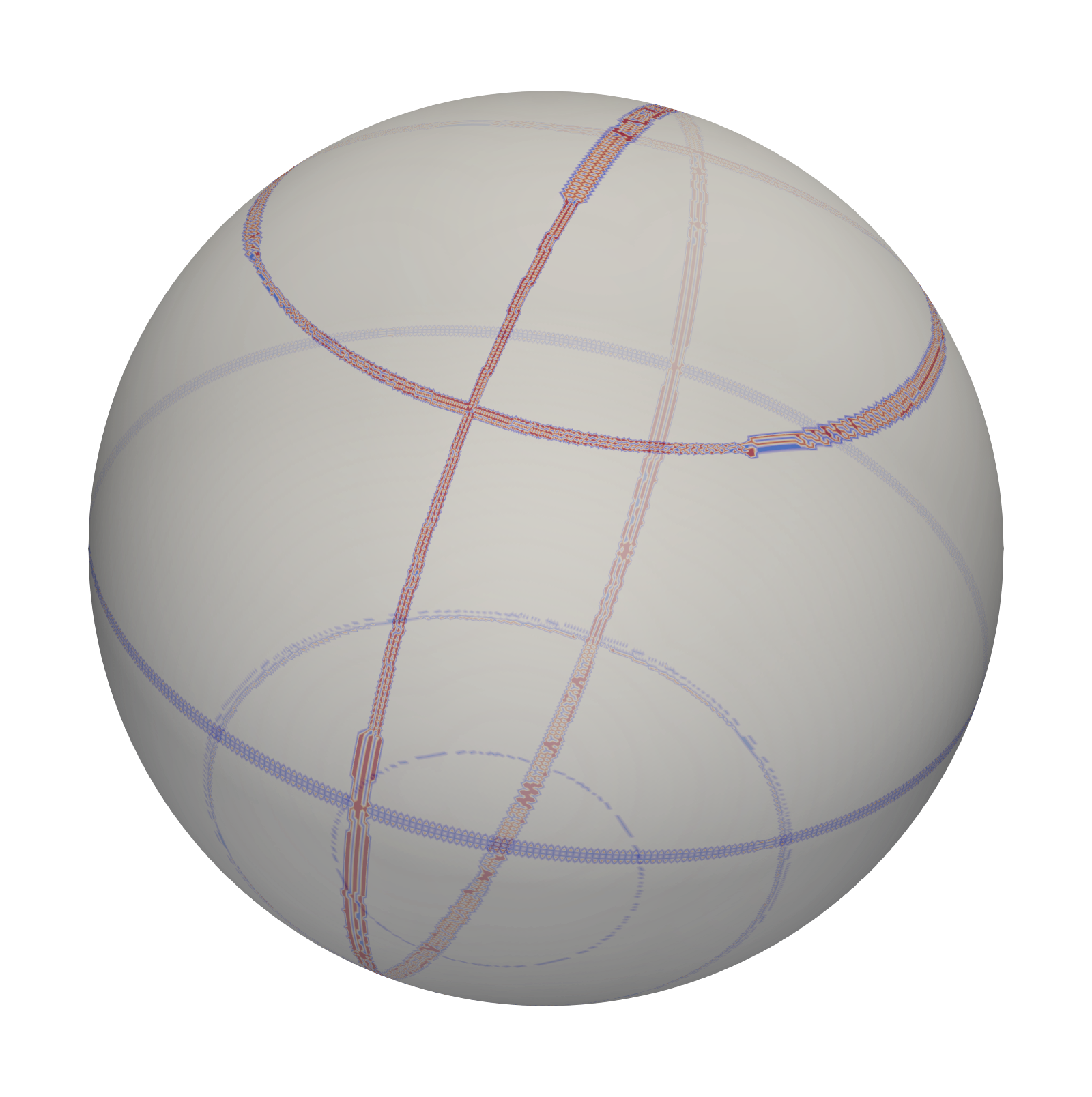}

}

\caption{Detected discontinuity cells (the red region) on regionally refined
meshes. \label{fig:Detected-discontinuity-cells}}
\end{figure}

\subsection{Comparison with minmod edge detection\label{subsec:Comparison-with-minmod}}

As alluded to in Section~\ref{subsec:Weighted-least-squares-approxima},
RDI is closely related to WLS-ENO, which is in turn related to limiters
in solving hyperbolic PDEs, such as minmod (see, e.g., \citep{leveque1992numerical}).
Therefore, we compare RDI with the minmod edge detection (MED) \citep{archibald2005polynomial,saxena2009high}.
Since MED is only defined in 2D, we conduct the comparison on planes. 

First, we use two test functions, $f_{3}$ and $f_{4}$, from \citep{archibald2005polynomial},
both of which contain only $C^{0}$ discontinuities. Specifically,
function $f_{3}$ is given by 
\[
f_{3}(x,y)=\begin{cases}
xy+\cos(2\pi x^{2})-\sin(2\pi x^{2}), & \textrm{if }x^{2}+y^{2}\leq\frac{1}{4},\\
10x-5+xy+\cos(2\pi x^{2})-\sin(2\pi x^{2}), & \textrm{if }x^{2}+y^{2}>\frac{1}{4},
\end{cases}
\]
for $-1\leq x,y\leq1$, and $f_{4}$ is the gray level of the Shepp-Logan
phantom \citep{shepp1974fourier}. 

For comparison with \citep{archibald2005polynomial}, we generated
$16,384$ random points on $[-1,1]\times[-1,1]$. As RDI uses a mesh
data structure for computing the neighborhood, we computed the Delaunay
triangulation of these points when applying RDI. To visualize the
detected discontinuities, we computed an approximation of the local
jump function
\[
\delta f(x)=\begin{cases}
\max_{R(x)}f(x)-\min_{R(x)}f(x), & \textrm{if }x\in D(f),\\
0, & \textrm{if }x\notin D(f),
\end{cases}
\]
where $D(f)$ is the discontinuity region detected by RDI, and $R(x)$
is the $1$-ring neighborhood of the point $x$ on the sampled mesh.
Figure~\ref{fig:Original-function-and} shows the local jump function,
which is similar to Figures~4.2 and 4.3 of MED \citep{archibald2005polynomial}.
However, the detected discontinuities from MED appear to be thinner
than those of RDI, since RDI uses larger stencils.

\begin{figure}
\subfloat[$f_{3}$]{\includegraphics[width=0.48\columnwidth]{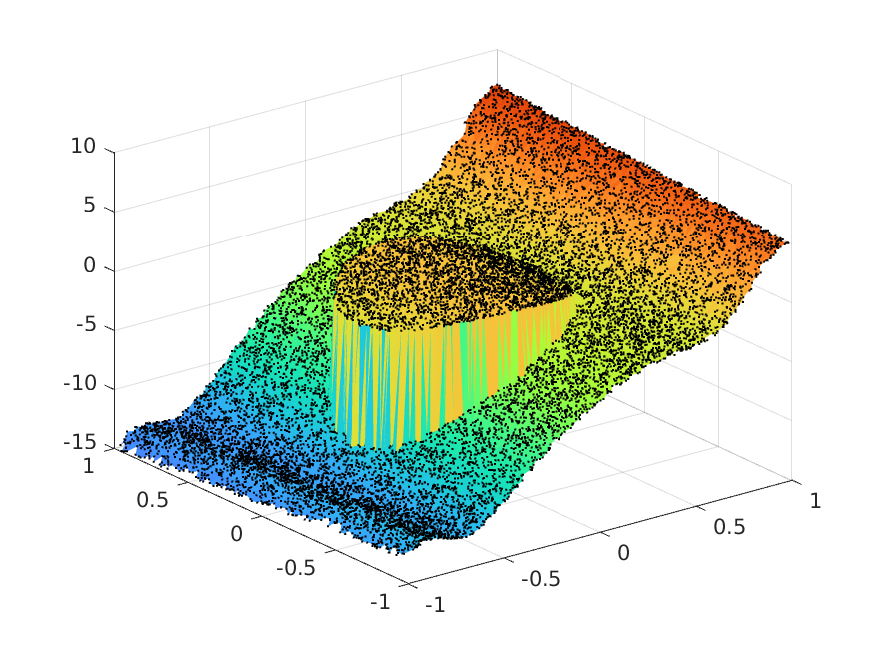}

}\subfloat[$\delta f_{3}$]{\includegraphics[width=0.48\columnwidth]{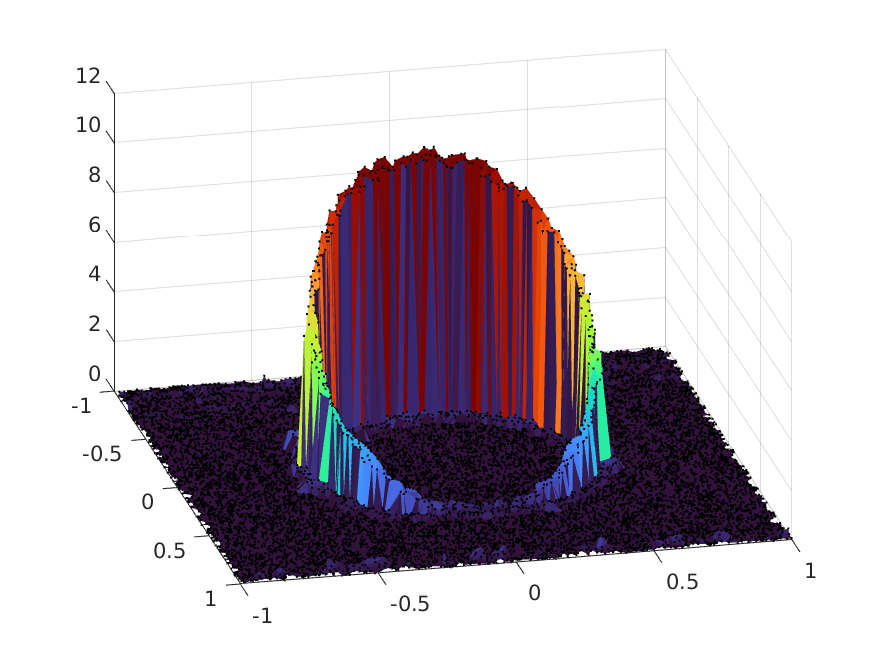}

}

\subfloat[$f_{4}$]{\includegraphics[width=0.48\columnwidth]{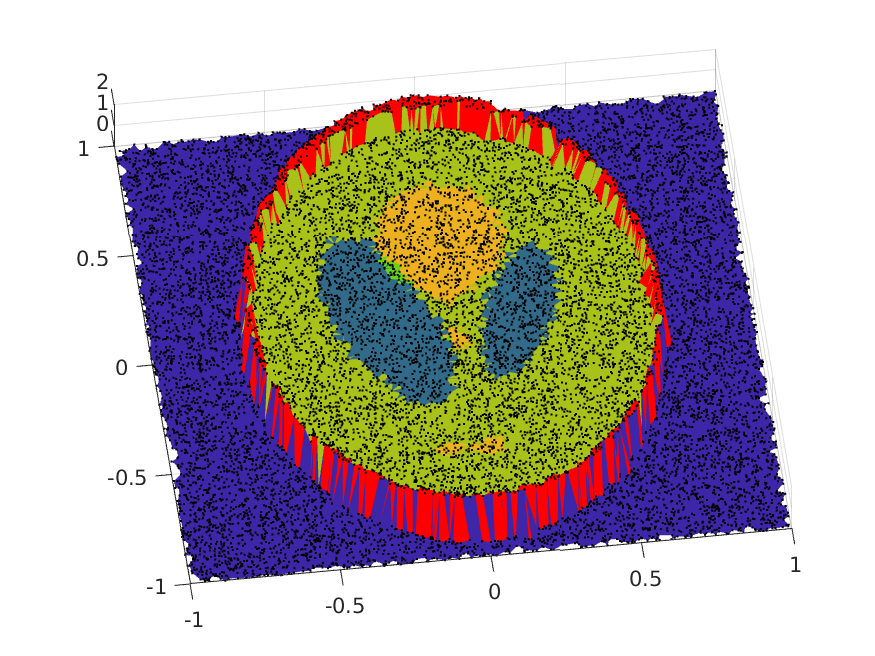}

}\subfloat[$\delta f_{4}$]{\includegraphics[width=0.48\columnwidth]{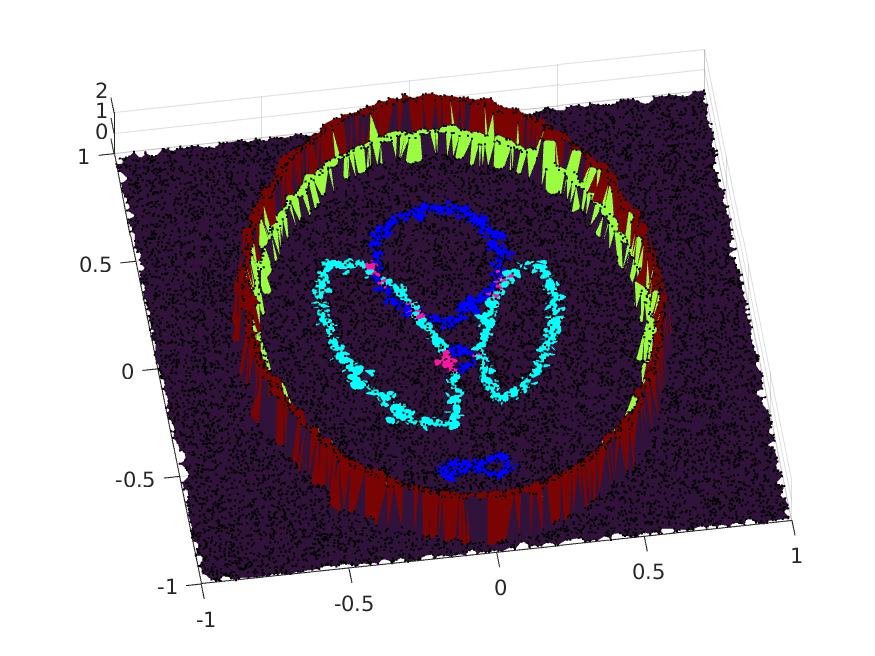}

}

\caption{Original functions (a) $f_{3}$ and (b) $f_{4}$, and their approximated
local jump functions (c) and (d), respectively.\label{fig:Original-function-and}}
\end{figure}

A key feature of RDI, which is also partially the reason for its larger
stencils, is that it can detect both $C^{0}$ and $C^{1}$ discontinuities.
In \citep{saxena2009high}, MED was extended to detect $C_{1}$ discontinuities
by first estimating the derivatives. To compare with \citep{saxena2009high},
we use the test function $f_{5}$ from \citep{saxena2009high},
\[
f_{5}=\begin{cases}
-(\sqrt{x^{2}+y^{2}}-\frac{1}{2})+\frac{1}{12}\sin(2\pi\sqrt{x^{2}+y^{2}}), & \textrm{if }\sqrt{x^{2}+y^{2}}<\frac{1}{2},\\
(\sqrt{x^{2}+y^{2}}-\frac{1}{2})+\frac{1}{12}\sin(2\pi\sqrt{x^{2}+y^{2}}), & \textrm{if }\sqrt{x^{2}+y^{2}}\geq\frac{1}{2},
\end{cases}
\]
which has a $C_{1}$ discontinuity at the origin and along the unit
circle. We generated three different meshes over $[-1,1]\times[-1,1]$
of similar sizes as those in \citep{saxena2009high}, including one
uniform mesh and two irregular meshes. As shown in Figure~\ref{fig:Discontinuities-of-f5},
RDI detected all the $C^{1}$ discontinuities similar to MED, except
for the discontinuities at the center over the uniform mesh. It is
worth noting that RDI produced fewer false positives than those in
\citep{saxena2009high}, probably because the WLS scheme used in RDI
is less prone to noise. In addition, RDI is potentially more efficient
than MED, since the most expensive part of RDI, namely the OSUS operator,
is independent of the function values and hence can be constructed
in an offline preprocessing step. In contrast, the operators in MED
depend on function values and cannot be evaluated offline.

\begin{figure}
\subfloat[$f_{5}$]{\includegraphics[width=0.48\columnwidth]{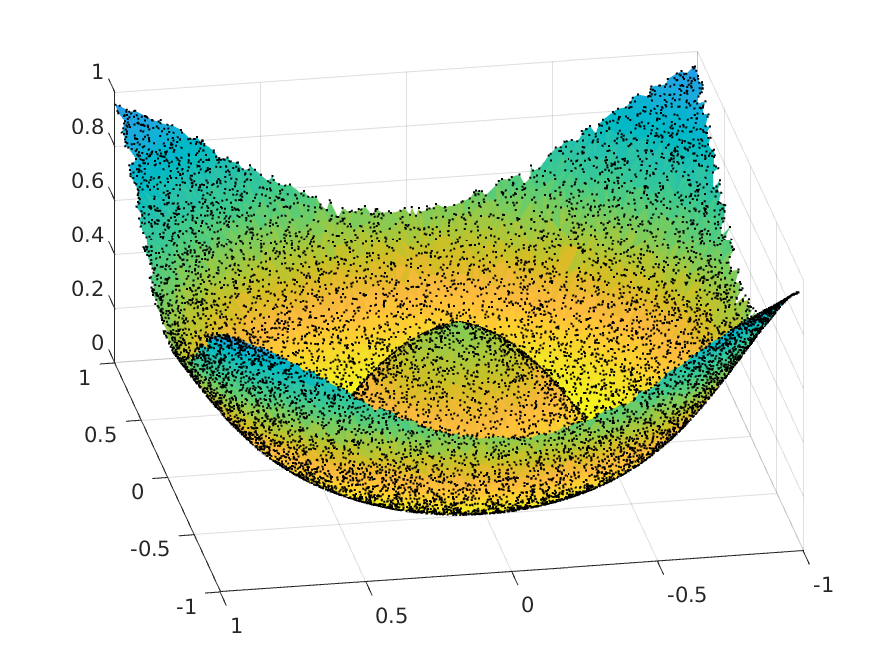}

}\hfill\subfloat[On a uniform grid of size $16,384$.]{\includegraphics[width=0.48\columnwidth]{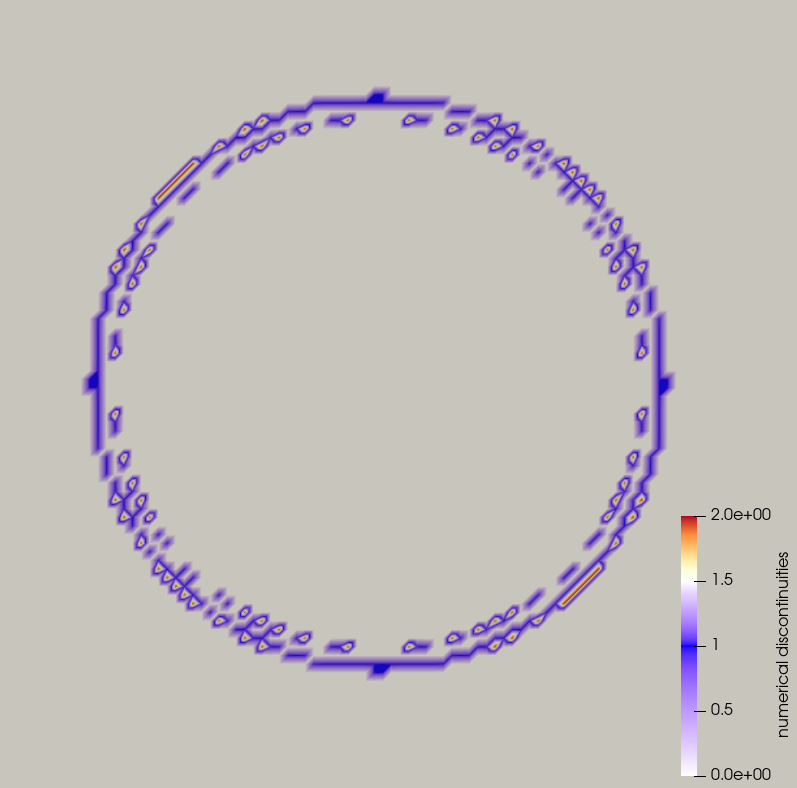}

}

\subfloat[On the Delaunay triangulation of $16,384$ randomly sampled points.]{\includegraphics[width=0.48\columnwidth]{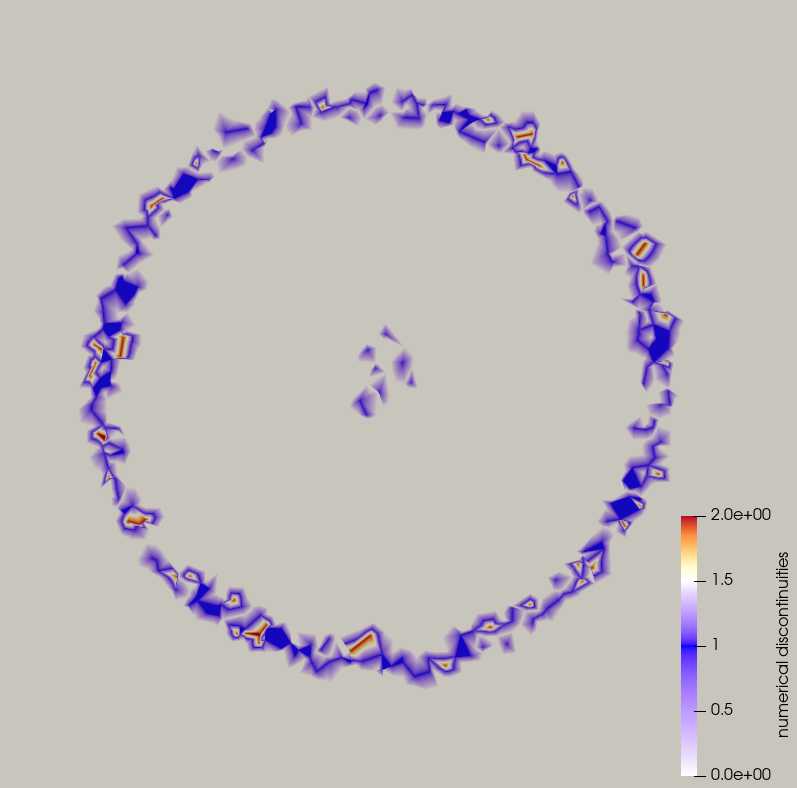}

}\hfill\subfloat[On the Delaunay triangulation of $65,536$ randomly sampled points.]{\includegraphics[width=0.48\columnwidth]{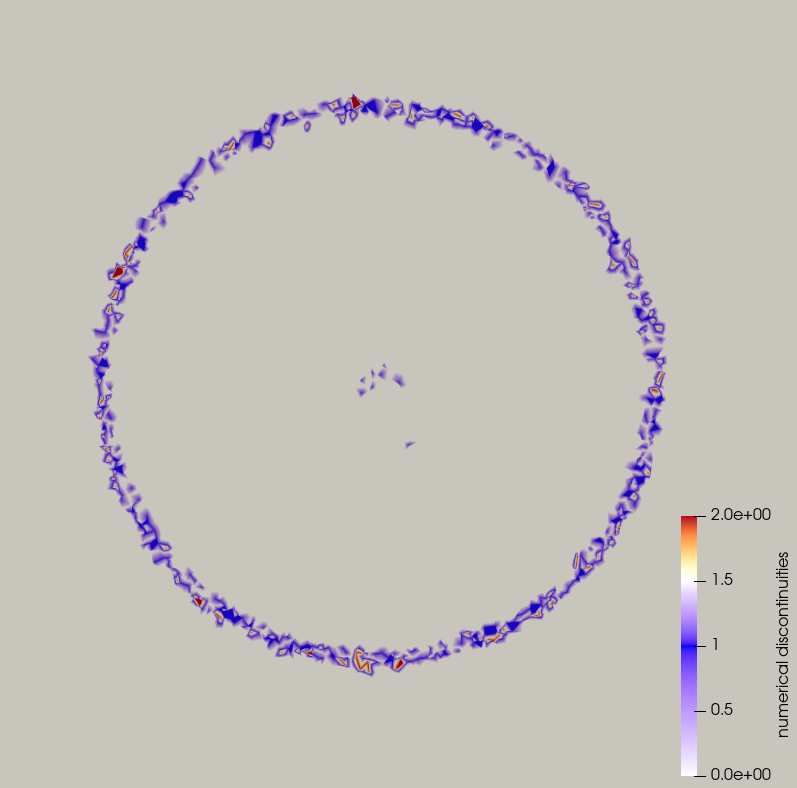}

}

\caption{Function $f_{5}$ and $C_{1}$ discontinuities detected by RDI on
three different meshes.\label{fig:Discontinuities-of-f5}}
\end{figure}

\subsection{Generalization to surfaces with sharp features\label{subsec:Numerical-results-with-sharp-features}}

Our previous examples employed simple smooth geometries, such as spheres
and planes. We now consider surfaces with sharp features, which present
additional complications. Similar to \citep{li2019compact}, we virtually
split the surface mesh into smooth patches along the feature curves
and then employ one-sided stencils along the sharp features. The virtual
splitting help prevent many false positives and negatives, as we demonstrate
in \ref{sec:virtual split and geometric discontinuities}. To maintain
stability, we increase the stencil sizes near the patch boundaries
when computing the OSUS operator, as discussed in Section~\ref{subsec:Generalization-to-surfaces}.

We carried out several experiments and displayed the results of scaled
functions $f_{6}$ and $f_{7}$, which are defined as
\[
f_{6}(x,y,z)=\tanh(x)\textrm{sign}(y)+\tanh(y)\textrm{sign}(z)+\tanh(z)\textrm{sign}(x),
\]
\[
f_{7}(x,y,z)=(\max\{x\}-\min\{x\})g\left(\frac{x-\min\{x\}}{\max\{x\}-\min\{x\}}\right),
\]
respectively, where 
\[
g(x)=\begin{cases}
x, & \textrm{if }0\leq x<\frac{1}{4},\\
\frac{1}{2}-x, & \textrm{if }\frac{1}{4}\leq x<\frac{1}{2},\\
\frac{3}{4}, & \textrm{if }\frac{1}{2}\leq x<\frac{3}{4},\\
16(x-\frac{3}{4})^{3}+\frac{3}{4}, & \textrm{if }\frac{3}{4}\leq x\leq1.
\end{cases}
\]

\begin{figure}
\subfloat[$f_{6}$]{\includegraphics[width=0.48\columnwidth]{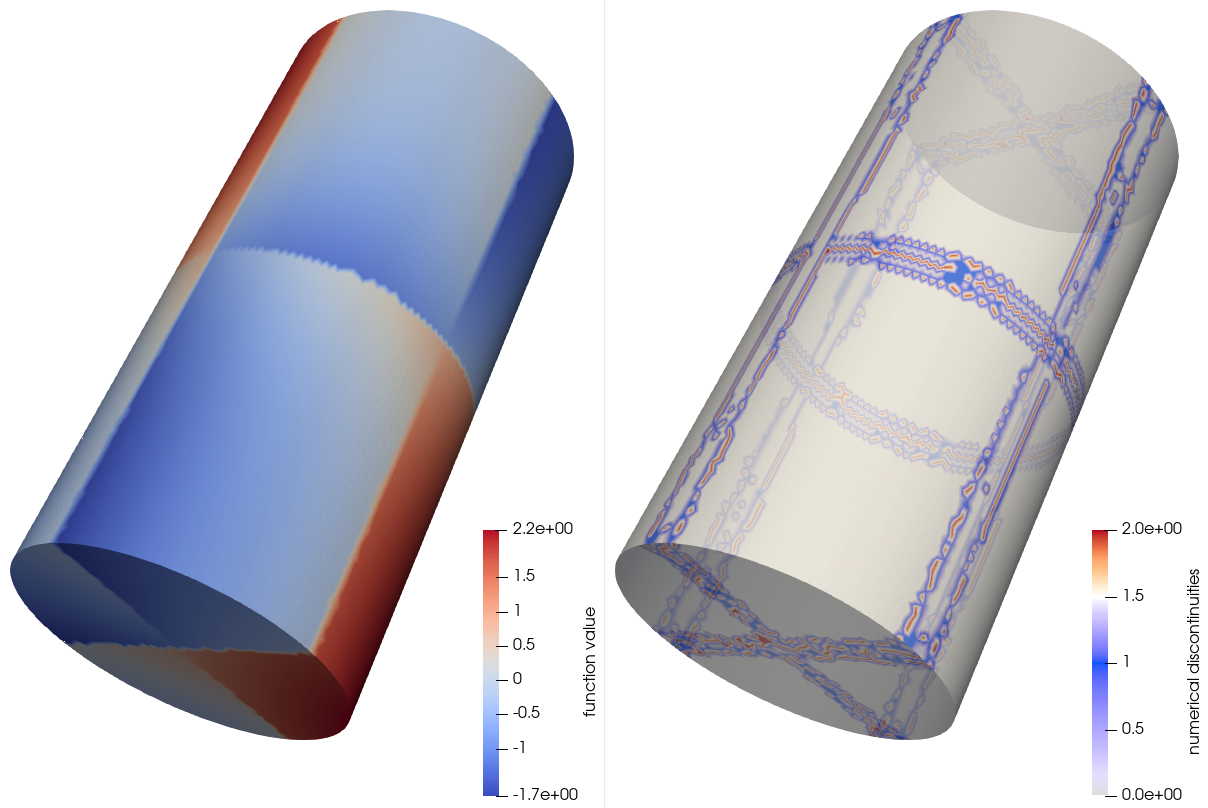}

}\subfloat[$f_{7}$]{\includegraphics[width=0.48\columnwidth]{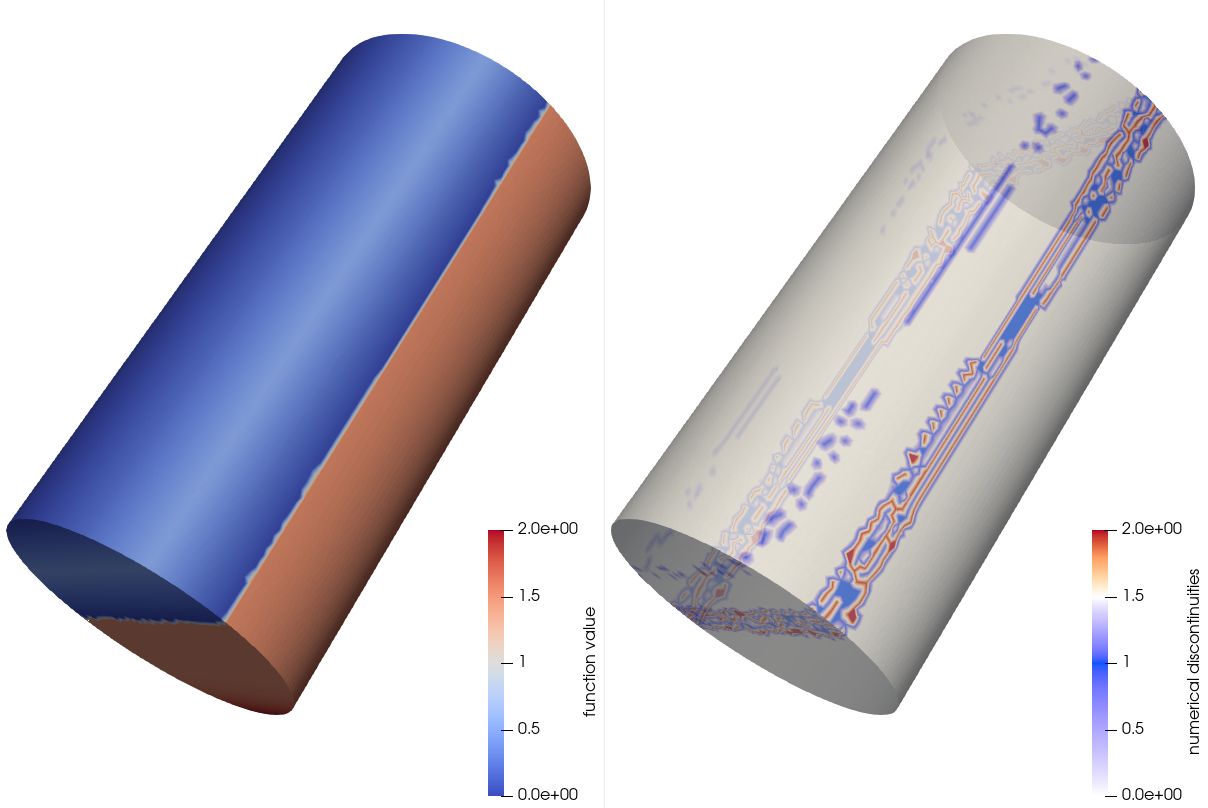}

}

\caption{Function value and numerical discontinuities detected by RDI without
virtual splitting on the surface triangulations of a cylinder.\label{fig:on cylinder}}
\end{figure}

\begin{figure}
\subfloat[$f_{6}$]{\includegraphics[width=0.48\columnwidth]{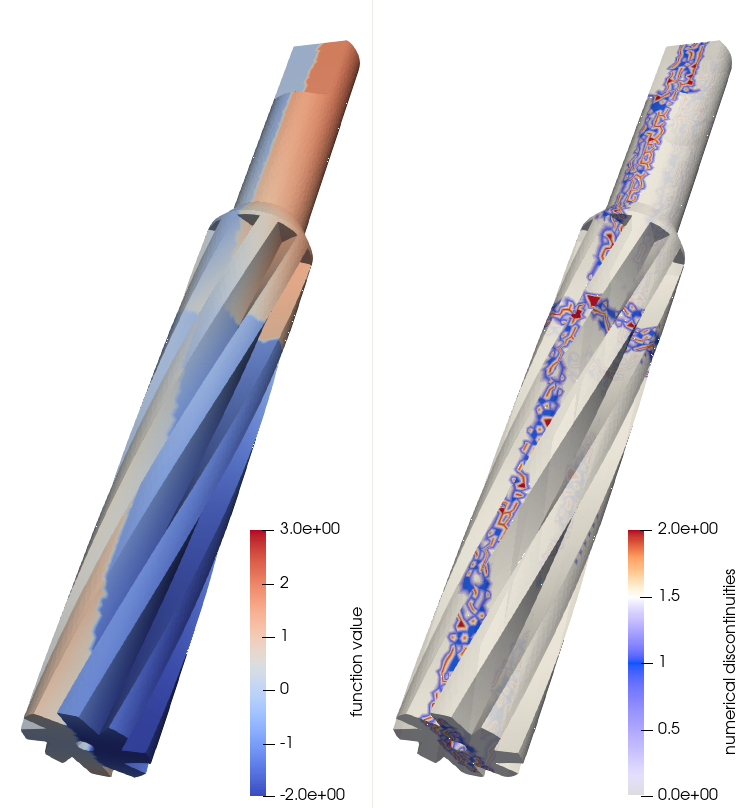}

}\subfloat[$f_{7}$]{\includegraphics[width=0.48\columnwidth]{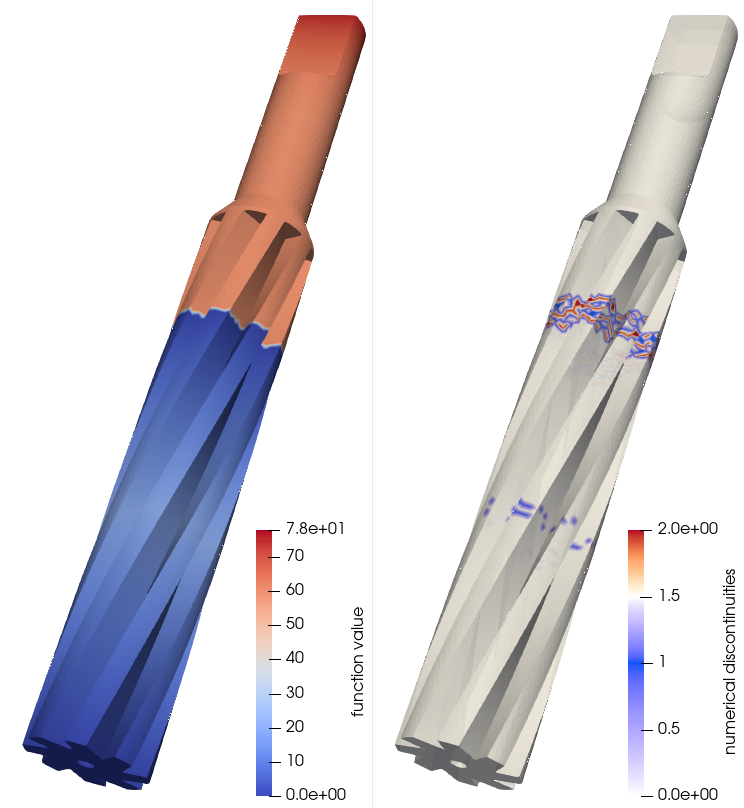}

}

\caption{Function value and numerical discontinuities detected by RDI with
virtual splitting on the surface triangulation of a reamer.\label{fig:on reamer}}
\end{figure}

Figures~\ref{fig:on cylinder} and \ref{fig:on reamer} display the
detected discontinuities on a cylinder and a reamer, respectively.
It can be observed that RDI accurately detected the $C^{0}$ discontinuities.
However, it missed part of the $C^{1}$ discontinuities as the quadratic
WLS had minimal overshoots and undershoots for the given mesh resolution
in the corresponding regions. Decreasing the threshold $\kappa$ would
make RDI more sensitive and miss fewer $C^{1}$ discontinuities, although
it may also introduce some false positives.

\subsection{Application to remap\label{subsec:Application-to-remap}}

Robust detection of discontinuities has a variety of applications.
In this section, we demonstrate the use of RDI in remapping data between
different meshes, also known as data remap. In this context, upon
identifying $C^{0}$ and $C^{1}$ discontinuities, one can employ
numerical techniques, such as WLS-ENO remap \citep{li2019compact}
and CAAS \citep{bradley2019communication}, to resolve the Gibbs phenomena
\citep{gottlieb1997gibbs}. Figure~\ref{fig:Comparison-between-WLS-WLS-ENO}
compares the remapping of $f_{2}$ using WLS and WLS-ENO on a sphere,
where the former does not distinguish smooth and discontinuous regions
and the latter adjusts the weighting schemes at discontinuities. As
can be seen, the WLS result suffers from oscillations near discontinuities.
In contrast, the combination of WLS-ENO and RDI successfully eliminated
overshoots and undershoots while preserving accuracy in smooth regions.

\begin{figure}
\subfloat[$f_{2}$ on coarse SCVT with intersection plane $y=0$.]{\includegraphics[width=0.35\columnwidth]{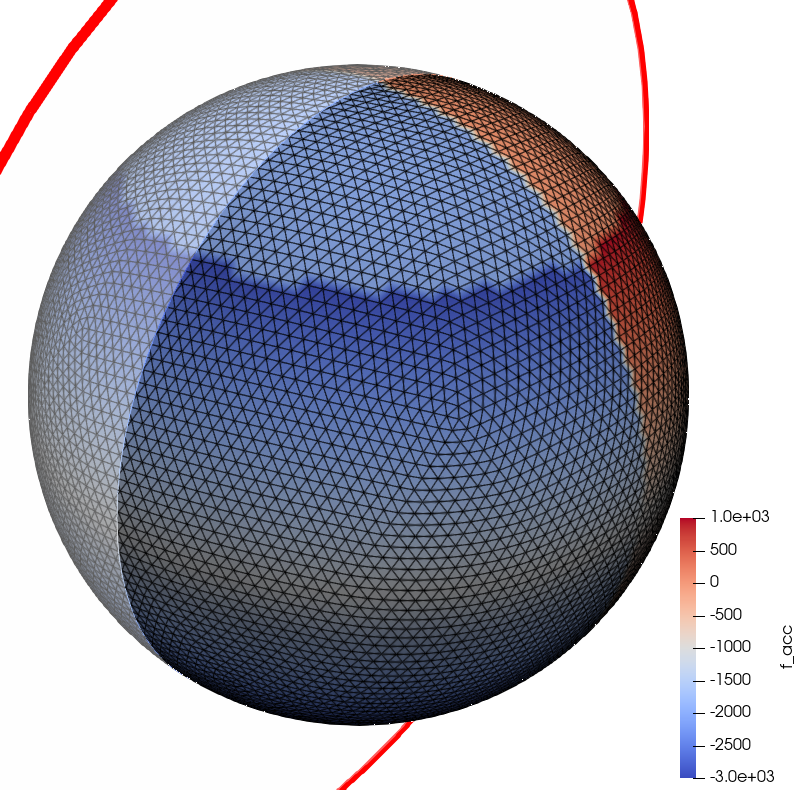}

}$\quad$\subfloat[Exact and remapped function values on cross-section curve.]{\includegraphics[width=0.61\columnwidth]{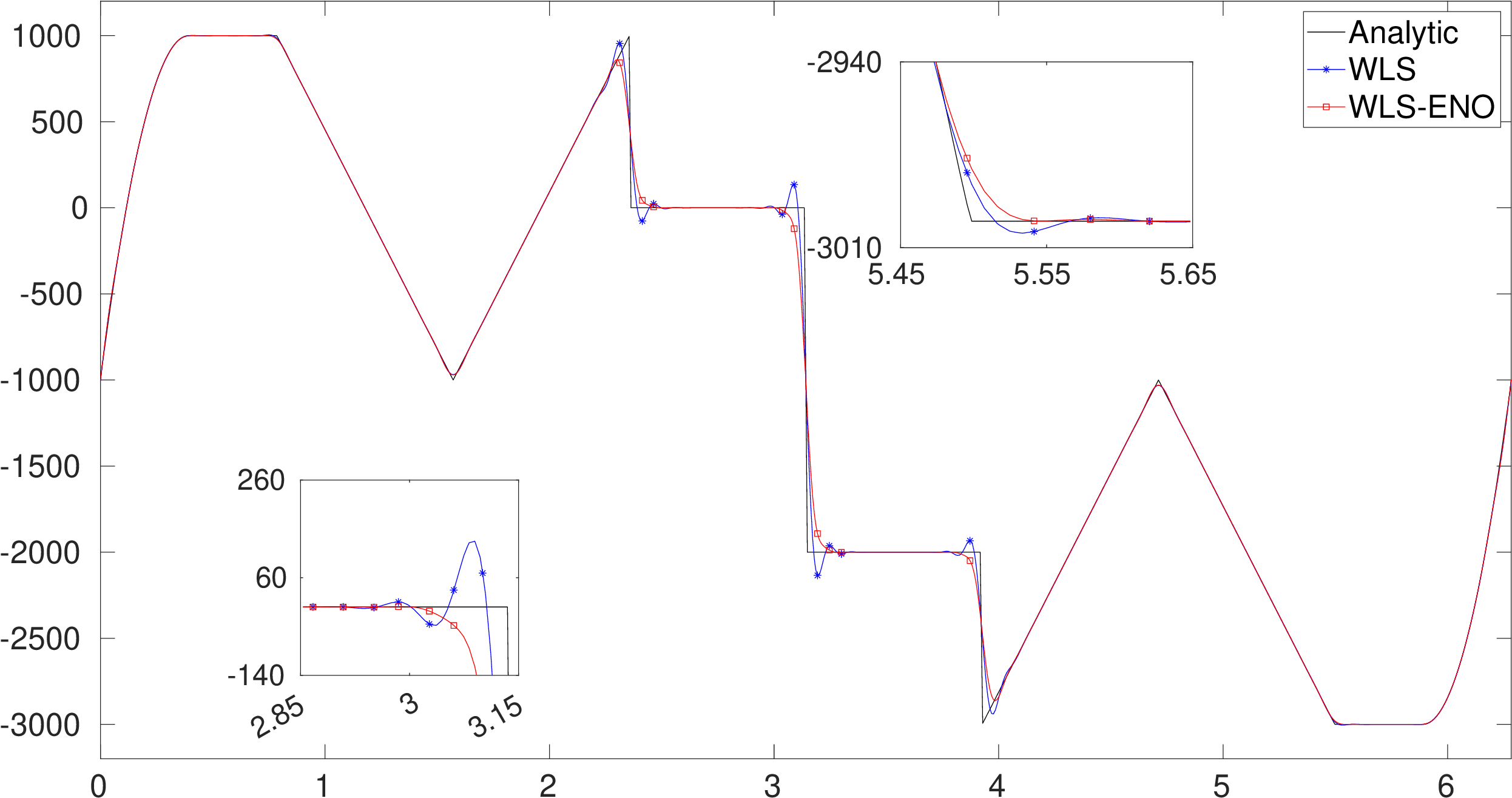}

}

\caption{Comparison between original WLS and WLS-ENO without and with detection
and treatment of discontinuities, respectively.\label{fig:Comparison-between-WLS-WLS-ENO}}
\end{figure}

\section{\label{sec:Conclusions}Conclusions}

In this paper, we have introduced the Robust Discontinuity Indicators
(RDI) method, an innovative approach for the robust and efficient
detection of discontinuities in the approximation of piecewise continuous
functions over meshes. Our experiments and analyses have demonstrated
the potential of the RDI in identifying both $C^{0}$ and $C^{1}$
discontinuities in node-based values and in addressing the challenges
associated with non-uniform meshes and complex surface geometries.

Despite the promising results obtained, several avenues exist for
the further improvement and expansion of the RDI method. One potential
enhancement involves incorporating a thinning process to refine the
detected discontinuities further, thereby making them more precise.
The implementation of this thinning process could significantly improve
the fidelity of the approximation and further reduce the occurrence
of the Gibbs phenomena, thereby enhancing the overall quality of the
computational solution. Moreover, the present work primarily focused
on 2D surfaces, creating an opportunity for the extension of the RDI
method to three dimensions. Such an extension would significantly
broaden the application range of the RDI method, allowing it to accommodate
more complex real-world scenarios found in fields such as fluid dynamics,
material science, and medical imaging.

In conclusion, the RDI method represents a significant step forward
in the robust and efficient detection of discontinuities in piecewise
continuous function approximations. The potential improvements and
expansions identified will pave the way for further research and development
in this critical area, with the ultimate goal of achieving more accurate
and efficient computational methods.

\section*{Acknowledgments}

This work was supported under the Scientific Discovery through Advanced
Computing (SciDAC) program in the US Department of Energy\textquoteright s
Office of Science, Office of Advanced Scientific Computing Research
through subcontract \#462974 with Los Alamos National Laboratory.

\bibliographystyle{elsarticle-num}
\bibliography{solution_transfer}

\appendix

\section{The Effect of Virtual Splitting along Sharp Features\label{sec:virtual split and geometric discontinuities}}

A general surface might have geometric discontinuities, such as features
and ridge points. Running RDI on such geometric discontinuities directly
might yield inaccurate detection of numerical discontinuities. Specifically,
a smooth function in the Euclidean space could often be misclassified
as $C^{1}$ discontinuity near the geometric feature and ridge points.
Conversely, a function with $C^{1}$ discontinuity that coincide with
the geometric discontinuity might be mistaken as a smooth function. 

To illustrate these two cases, let us first consider two functions
$f_{6}(x,y,z)=g_{1}(x,y,z,0.5)$ and $f_{7}(x,y,z)=g_{1}(x,y,z,1)$
on a curve $\Gamma=\{(t,g_{2}(t),t)|t\in\mathbb{R}\}$, where 
\[
g_{1}(x,y,z,a)=\begin{cases}
a+\frac{3}{2}(y-a), & z>a,\\
x, & z\leq a,
\end{cases}
\]
and 
\[
g_{2}(t)=\begin{cases}
1+\frac{1}{2}(t-1), & t>1,\\
t, & t\leq1,
\end{cases}
\]
are two piecewise smooth functions. $g_{1}(x,y,z,a)$ has a $C^{1}$
discontinuity at $z=a$ on $\varGamma$ if $a\leq1$, since the gradients
are $(0,\frac{3}{2},0)$ and $(0,0,1)$ on two sides of $z=a$. However,
using RDI directly without virtual splitting would result in false
positives for $f_{6}$ and false negatives for $f_{7}$, respectively.

\begin{figure}
\subfloat[$\alpha$]{\includegraphics[width=0.48\columnwidth]{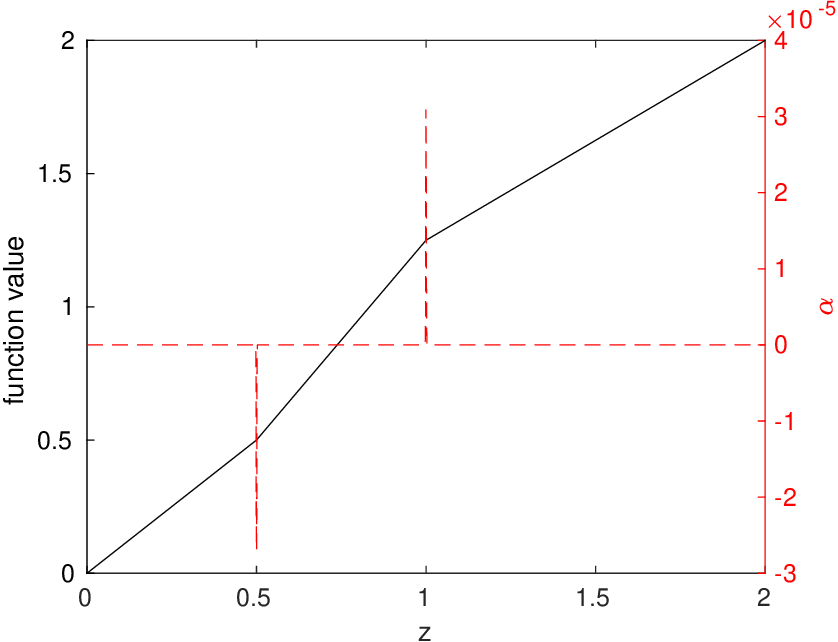}

}$\quad$\subfloat[$\beta$]{\includegraphics[width=0.48\columnwidth]{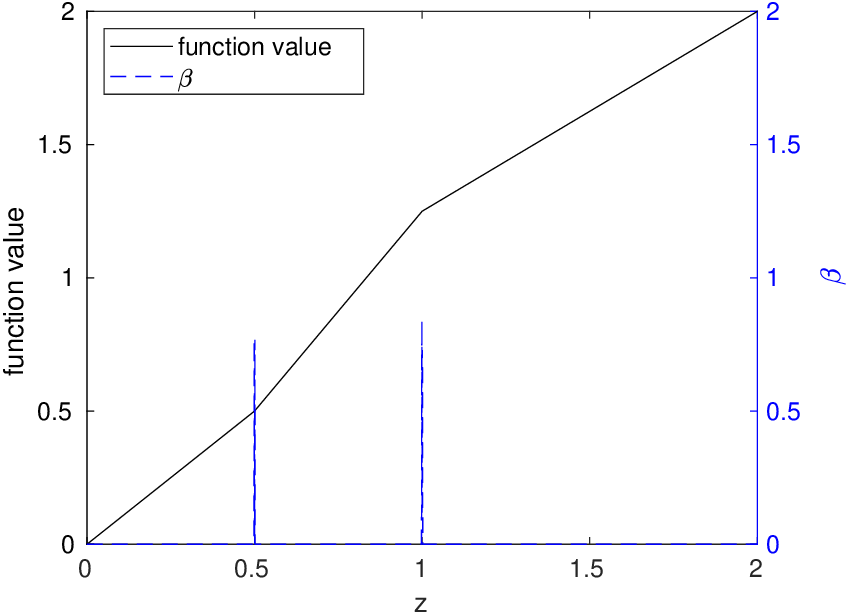}

}

\caption{$\alpha$ and $\beta$ values of $f_{6}$ on $\varGamma$. The black
line is the function value of $f_{6}$. In (a), the red dash line
is $\alpha$. In (b), the blue dash line is $\beta$. \label{fig:a-and-b-f6}}
\end{figure}

Figure~\ref{fig:a-and-b-f6} shows that RDI without virtual splitting
can detect the $C^{1}$ discontinuity at $z=a=0.5$. However, it also
erroneously classified the smooth region near $z=1$ as $C^{1}$ discontinuity.
Figure~\ref{fig:a-and-b-f7} illustrates that RDI without virtual
splitting could miss the $C^{1}$ discontinuity at $z=a=1$ because
$\beta\equiv0$.

\begin{figure}
\subfloat[$\alpha$]{\includegraphics[width=0.48\columnwidth]{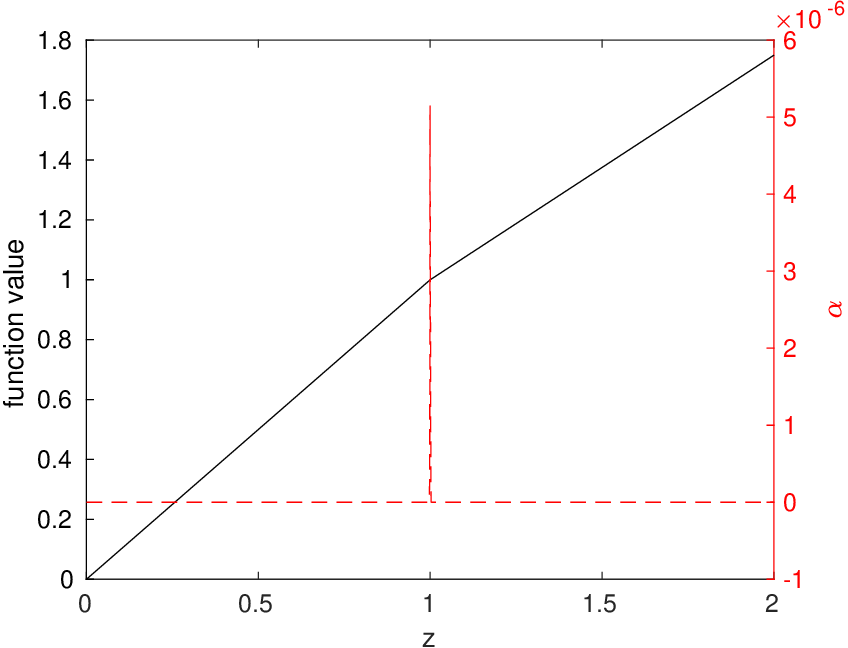}

}$\quad$\subfloat[$\beta$]{\includegraphics[width=0.48\columnwidth]{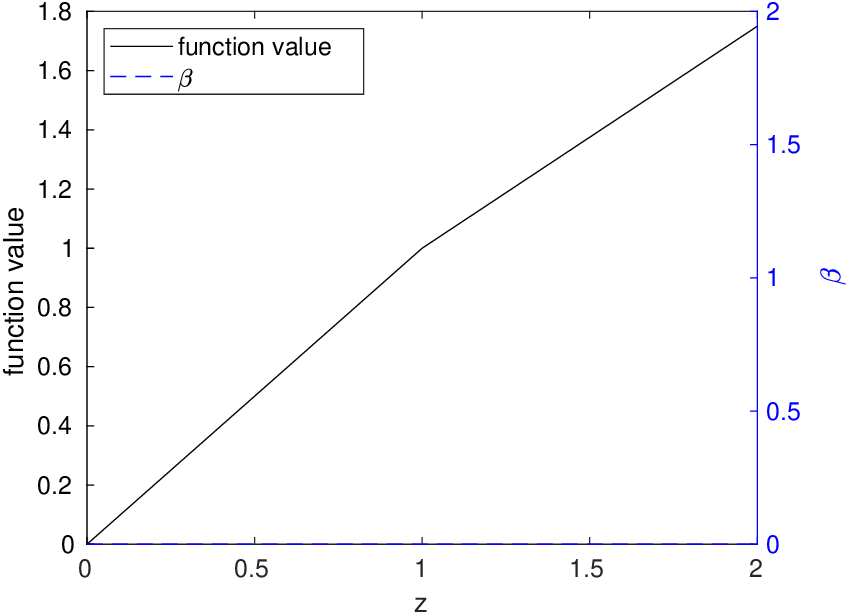}

}

\caption{$\alpha$ and $\beta$ of $f_{7}$ on $\varGamma$. The black line
is the function value of $f_{7}$. In (a), the red dash line is $\alpha$.
In (b), the blue dash line is $\beta$. \label{fig:a-and-b-f7}}
\end{figure}

 Similar misclassifications can also occur on surface meshes without
virtual splitting. For instance, Figure~\ref{fig:cylinder-f6} illustrates
that RDI without virtual splitting on the triangulation of a cylinder
for a simple smooth function $f_{6}(x,y,z)=z+2$. The smooth function
$f_{6}$ in the Euclidean space is constant on the two bases and linear
on the lateral surface. RDI with virtual splitting produced expected
results. In contrast, RDI without virtual splitting would yield false
positives near the features on the bases. 

\begin{figure}
\includegraphics[width=0.98\columnwidth]{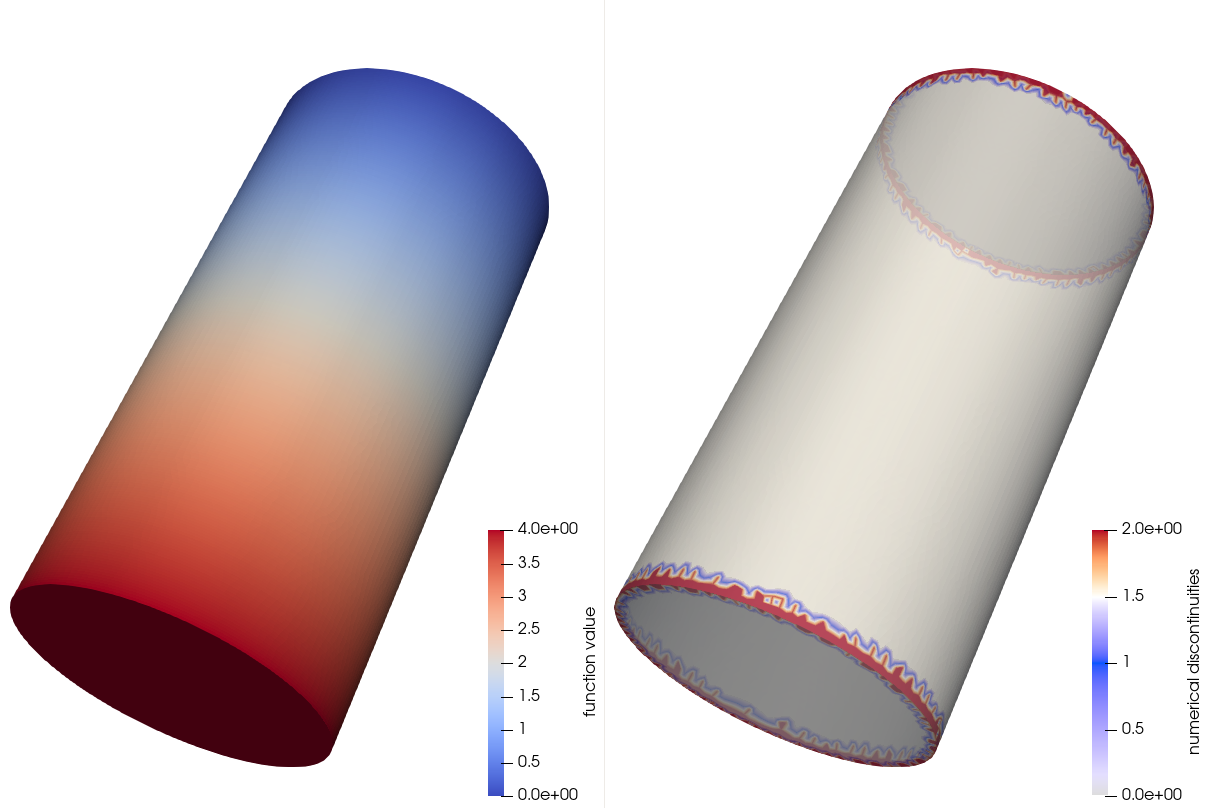}

\caption{Function value (left) and numerical discontinuities detected by RDI
without virtual splitting (right) of $f_{6}$ on the triangulation
of a cylinder.\label{fig:cylinder-f6}}

\end{figure}

\end{document}